\newcommand{\pr}{\mathbf{Pr}}
\newcommand{\e}{{\mathbf E}}
\newcommand{\set}[1]{\left\{ #1 \right\}}
 \newcommand{\bag}{\begin{align}}
\newcommand{\bags}{\begin{align*}}
\newcommand{\eag}{\end{align*}}
\newcommand{\eags}{\end{align*}}
\newtheorem{thm}{Theorem}
\newtheorem{lem}[thm]{Lemma}
\newtheorem{cor}[thm]{Corollary}
\newtheorem{dfn}[thm]{Definition}
\newtheorem{rem}[thm]{Remark}
\newcommand\cA{\mathcal A}
\newcommand\cC{\mathcal C}
\newcommand\cD{\mathcal D}
\newcommand\cP{\mathcal P}
\newcommand{\brac}[1]{\left(#1\right)}
\newcommand{\bfrac}[2]{\brac{\frac{#1}{#2}}}
\def\nono{\nonumber}
\def\t{\tau}
\def\e{\epsilon}
\def\b{\beta}
\def\m{\mu}
\begin{document}
\title{Pattern Colored Hamilton Cycles in Random Graphs}
\author{Michael Anastos\thanks{Department of Mathematical Sciences, Carnegie Mellon University, Pittsburgh PA15213. Research supported in part by NSF Grant DMS1661063. Email: manastos@andrew.cmu.edu.} \and Alan Frieze\thanks{Department of Mathematical Sciences, Carnegie Mellon University, Pittsburgh PA15213. Research supported in part by NSF Grant DMS1661063: Email alan@random.math.cmu.edu.}}

\maketitle
\begin{abstract}
We consider the existence of patterned Hamilton cycles in randomly colored random graphs. Given a string $\Pi$ over a set of colors $\set{1,2,\ldots,r}$, we say that a Hamilton cycle is $\Pi$-colored if the pattern repeats at intervals of length $|\Pi|$ as we go around the cycle. We prove a hitting time for the existence of such a cycle. We also prove a hitting time result for a related notion of $\Pi$-connected. 
\end{abstract}
\section{Introduction}
In recent years there has been a growing interest in the properties of randomly colored random graphs. The edge-colored random graph process can be desribed as follows: let $G_0,G_1,...,G_{N}$, $N=\binom{n}{2}$, be the random graph process. That is $e_i=E(G_{i})\setminus E(G_{i-1})$ is chosen uniformly at random from all the edges not present in $E(G_{i-1})$. For $i\in [N]$, at step $i$, a random color $c_i$  is chosen independently and uniformly at random from $[r]$ and is assigned to $e_i$. We denote this randomly $[r]$-colored version of the random graph process by $G_0^r,G_1^r$,...,$G_N^r$. 

Much of the interest in this model has been focussed on {\em rainbow} colorings. A set $S$ of edges is said to be rainbow colored if every edge of $S$ has a different color. One of the earliest papers on this subject is due to Frieze and Mckay \cite{FrMcK}. In this paper, $r=cn$ where $c\geq 1$ is a constant. Let $\t_0=\min\set{i:G_i\text{ is connected}}$ be the hitting time for connectivity and let $\t_1=\min\set{i:\text{at least $n-1$ distinct colors have been used}}$. Then \cite{FrMcK} shows that $\t_0=\t_1$ w.h.p. 

After this, the attention has been focussed on the question of when does there exist a rainbow Hamilton cycle. Cooper and Frieze \cite{CoFr} showed that $O(n\log n)$ random edges and $O(n)$ colors are sufficient. This was improved to $(1+o(1))n\log n$ random edges and $(1+o(1))n$ colors by Frieze and Loh \cite{FrLo}. This was sharpened still further by Ferber and Krivelevich \cite{FeKr} who showed that the number of edges can be reduced to the exact threshold for Hamiltonicity.  Bal and Frieze considered the case where exactly $n$ colors are available and showed that $O(n\log n)$ random edges are sufficient to obtain a rainbow Hamilton cycle.

The next phase of this study, concerns Hamilton cycles in random $k$-uniform hypergraphs. There are various notions of Hamilton cycle in this context, and Ferber and Krivelevich \cite{FeKr} proved that if $m_H$ edges are needed for a given type of hamilton cycle to exist w.h.p. then $O(m_H)$ random edges and $(1+\e)m_H$ colors are sufficient for a rainbow Hamilton cycles. The hypergraph results in \cite{FeKr} were sharpened by Dudek, English and Frieze \cite{DEF}.

Cooper and Frieze \cite{CoFr1} considered the related question of what is the threshold for every $k$-bounded coloring of the edges of $G_{n,m}$ to contain at least one rainbow Hamilton cycle. Here $k$-bounded means that no color can be used more than $k$ times. 

Rainbow is one pattern of coloring and Espig, Frieze and Krivelich \cite{EFK} considered other colorings. Suppose that $r$ is constant. They considered the existence of hamilton cycles where the edges of the cycle are colored in sequence $1,2,\ldots,r,1,2,\ldots,r,\ldots$. When $r=2$ they called such colorings {\em Zebraic}. They gave tight results in terms of the number of random edges needed for such cycles. Our first result generalises this and considers arbitrary patterns of coloring.

An \emph{$[r]$-pattern} $\Pi$ is a finite sequence with elements in $[r]$. For a given  $[r]$-pattern $\Pi$ let $\ell=|\Pi|$ be its length and for $1\leq j\leq \ell$ let $\Pi_j$ be its $j$'th element. We say a path/cycle $f_1,f_2,...,f_k$ is $\Pi$-colored if there exists an integer $0\leq l\leq \ell-1$ such that $\forall j\in[k]$, $f_j$ is colored by $\Pi_{j+l}$. 
So for example if $[r]=3$, $\Pi=1,2,2,3$ and $P=f_1,f_2,...,f_6$ is  a $\Pi$-colored path then $f_1,f_2,...,f_6$ may have colors $1,2,2,3,1,2$ or $2,2,3,1,2,2$ or $2,3,1,2,2,3$ or $3,1,2,2,3,1$ respectively.
Here and elsewhere for a given pattern $\Pi$ and $s>|\Pi|$ we let $\Pi_s=\Pi_j$ where $j=s\mod \ell$. In this paper we are interested in the following question. Given $r=O(1)$ and an $r$-pattern $\Pi$ when does the first $\Pi$-colored Hamilton cycle appears in the process $G_0^r,G_1^r$,\ldots,$G_N^r$.   

We will assume without loss of generality that $[r]=\set{\Pi_1,\Pi_2,\ldots,\Pi_\ell}$ i.e. every color in $[r]$ appears at least once in $\Pi$. We will also assume that $\ell$ divides $n$.

Let $r\in \mathbb{N}$ and let $\Pi$ be an $[r]$-pattern. Then,
$$ \tau_{\Pi}:=\min \{i: G_i^r \text{ contains a $\Pi$-colored Hamilton cycle} \}$$
For $v\in V$ we say that $v$ {\em fits} the pattern $\Pi$ in $G_i^r$ if there exist two distinct edges $e_1,e_2\in E(G)$ incident to $v$ and $1\leq j\leq\ell$  such that  $e_1$ has color $\Pi_j$ and $e_2$ has color $\Pi_{j+1}$. So in our example where $[r]=3$, $\Pi=1,2,2,3$ $v$ fits  $\Pi$ if it is incident to some $e_1,e_2$ with colors $c_1,c_2$ that satisfy $\{c_1,c_2\}\in \set{\set{1,2},\set{2,2},\set{2,3},\set{3,1} }.$
Furthermore we define the hitting time
$$ \tau_{fit-\Pi}:=\min \{i: \text{every } v \in V \text{ fits } \Pi \text{ in } G_i^r\}.$$
Observe that before $\tau_{fit-\Pi}$ occurs there is at least one vertex $v$ that ``does not fit the pattern".
That is there do not exist two colors that appear in adjacent places in the pattern and in the neighborhood of $v$. Thus clearly for any pattern $\Pi$ we have $\tau_{fit-\Pi}\leq \tau_{\Pi}$. 
\begin{thm}\label{Thm1}
Let $r=O(1)$ and $\Pi$ be an $[r]$-pattern. Then, w.h.p.\@
 $\tau_{fit-\Pi}=\tau_{\Pi}.$
\end{thm}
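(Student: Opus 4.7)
Since $\tau_{fit-\Pi}\le\tau_{\Pi}$ is deterministic, the task is to show $\tau_{\Pi}\le\tau_{fit-\Pi}$ w.h.p. I would follow the hitting-time paradigm for Hamiltonicity (Ajtai--Koml\'os--Szemer\'edi, Bollob\'as, and Espig--Frieze--Krivelevich for the zebraic special case), suitably adapted to general patterns. First I would pin down $\tau_{fit-\Pi}$ and the associated extremal vertex structure. A vertex $v$ fits $\Pi$ in $G^r_m$ iff its incident edges realize some color pair $(\Pi_j,\Pi_{j+1})$ that appears consecutively in $\Pi$. Letting $A\subseteq[r]\times[r]$ denote this set of adjacent pairs, first- and second-moment estimates on the number of non-fitting vertices (together with the usual coupling to an edge-colored $G_{n,p}$) give $\tau_{fit-\Pi}=\tfrac{1}{2}c_\Pi\, n\log n\,(1+o(1))$ w.h.p., for an explicit constant $c_\Pi=c_\Pi(A)$. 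The same analysis identifies a set $\textrm{SMALL}$ of ``barely fitting'' vertices -- those witnessed by only $O(1)$ edge pairs -- of size $o(n/\log n)$ and pairwise far apart.

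Second, I would record the pseudorandomness enjoyed by $G^r_{\tau_{fit-\Pi}}$. Since $r=O(1)$, each color class $G^{(c)}$ has $\Theta(n\log n)$ edges and, being a random subgraph of the underlying random graph, inherits the expansion of $G_{n,p}$ at the matching density; in particular each $G^{(c)}$ is a strong vertex-expander, as is the union of any two color classes indexed by consecutive positions in $\Pi$.

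Third, I would construct the $\Pi$-colored Hamilton cycle by two-round exposure. Reserve a batch of $\varepsilon\tau_{fit-\Pi}$ edges for patching; work in the main batch to build a $\Pi$-colored absorbing path $P_{\mathrm{abs}}$ of sublinear length, engineered so that any $o(n)$ subset of vertices (in particular $\textrm{SMALL}$) can be inserted into $P_{\mathrm{abs}}$ without breaking $\Pi$-coloration. Then extend $P_{\mathrm{abs}}$ into a $\Pi$-colored path covering all but $o(n)$ vertices via a pattern-compatible rotation-extension argument; finally absorb the leftover vertices into $P_{\mathrm{abs}}$ and use the reserved edges to close the resulting path into a cycle.

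The main technical obstacle is the rotation-extension step, since the standard P\'osa rotation reverses a segment of the path and hence its color sequence, which is only $\Pi$-colored when $\Pi$ is a palindrome. I would sidestep this by only rotating segments whose length is a multiple of $\ell$ and then replacing the reversed segment with a freshly-chosen $\Pi$-colored path of the same length between the same endpoints -- the expansion from the second stage ensures such replacement paths exist in abundance. Equivalently one can pass to the auxiliary graph $H$ on $n/\ell$ super-vertices whose edges represent $\Pi$-colored $\ell$-paths in $G$, and invoke Hamiltonicity of $H$ using the pseudorandomness inherited from the color classes. Either implementation produces boosting endpoint-sets of size $\Omega(n)$, which together with the absorbing structure and the reserve edges yield a $\Pi$-colored Hamilton cycle at time $\tau_{fit-\Pi}$.
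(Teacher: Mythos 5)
Your outline correctly identifies the central obstruction --- P\'osa rotation reverses a segment of the path and hence destroys a $\Pi$-coloring unless $\Pi$ is a palindrome --- but neither of your proposed fixes resolves it. Fix (a) fails: rotating by a multiple of $\ell$ still reverses the block, and the reversal of a $\Pi$-colored block is itself $\Pi$-colored only when $\Pi$ is palindromic; moreover ``replacing the reversed segment with a freshly-chosen $\Pi$-colored path'' is no longer a rotation, since you must preserve the vertex set of the current path, and a $\Pi$-colored re-routing through the same internal vertices with the same endpoints is far too rare to find repeatedly. Fix (b) is essentially the right idea but misses a crucial point: since a $\Pi$-colored $\ell$-path carries an orientation (again, unless $\Pi$ is palindromic), the auxiliary super-vertex graph $H$ must be a \emph{digraph}, and its Hamiltonicity cannot be obtained by undirected rotation-extension.

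This orientation issue is exactly what the paper handles, via a different route from yours: rather than absorbers plus rotation-extension, it builds a family $\cP$ of vertex-disjoint $\Pi$-colored paths from $V_1$ to $V_\ell$ covering all vertices (first covering the low-degree bad vertices with short paths via the algorithm $CoverBAD$, then finding perfect matchings $V_i''\to V_{i+1}''$ in color $\Pi_i$ for $i\in[\ell-1]$), closes them into a 2-factor with $O(\log n)$ cycles via a $\Pi_\ell$-colored matching $V_\ell''\to V_1''$, contracts each path to a single vertex to obtain a digraph on roughly $n/\ell$ vertices, and then delegates the merging of the $O(\log n)$ cycles to Frieze's directed Hamiltonicity argument based on double rotations (Lemma~\ref{reduction}). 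The directed reduction sidesteps segment reversal entirely --- this is the missing ingredient in your outline. The paper also uses no absorbers; note that single-vertex absorption into a $\Pi$-colored path would shift all subsequent colors by one position in the pattern, so any $\Pi$-compatible absorber would have to operate in blocks of size $\ell$, a further complication your sketch does not address.
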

Our second result is related to the notion of {\em rainbow connection}. Given a connected graph $G$, the rainbow connection $r_c(G)$ is defined as the smallest $r$ such that there exists an $r$-coloring of the edges of $G$ so that there is a rainbow path between every pair of vertices of $G$. This is a well studied concept, see Li, Shi and Sun \cite{lisun} for a survey. The rainbow connection of random graphs has also been studied. Heckel and Riordan \cite{HR} and He and Liang \cite{heliang} studied the rainbow connection of dense random graphs. Frieze and Tsourakakis \cite{hnprainbow} studied the rainbow connection of a random graph at the connectivity threshold. Dudek, Frieze and Tsourakakis \cite{DFT} and Kamce\v{v}, Krivelevich and Sudakov \cite{KKS} and Molloy \cite{Moll} studied the rainbow connection of random regular graphs. Suffice it to say that in general the rainbow connection is close to the diameter in all cases. Espig, Frieze and Krivelevich \cite{EFK} introduced the notion of {\em zebraic connection}. Given a 2-coloring of the edges of a connected graph $G$ we say that a path is {\em zebraic} if the colors of edges alternate along the path. A colored graph is zebraicly connected if there is a zebraic path joining every pair of vertices. In the paper \cite{EFK}, they proved a hittng time result for the zebraic connectivity of a random 2-coloring of the edges of a random graph. In this paper we generalise this notion to $\Pi$-connectivity. We say that $G$ is $\Pi$-connected if every two vertices are joined by a $\Pi$-colored path.  Also when does $G_i^r$ become $\Pi$-connected. For both questions we give a hitting time result.
In order to state the result about $\Pi$-connectivity we define the following hitting times: 
$$\tau_{1}:=\min\{i: G_i\text{ has minimum degree at least }1\}, $$

$$ \tau_{\Pi-connected}:=\min\{i: G_i^r \text{ is $\Pi$-connected}\}.$$
\begin{thm}\label{Thm2}
Let $r=O(1)$ and $\Pi$ be a non trivial $[r]$-pattern. Then w.h.p.\@
$$\tau_{\Pi-connected}= \tau_{1-\Pi}.$$
\end{thm}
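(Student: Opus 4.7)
\textbf{Proof proposal for Theorem~\ref{Thm2}.}

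The plan is to prove the two inequalities $\tau_{\Pi\text{-connected}}\ge \tau_{1\text{-}\Pi}$ and $\tau_{\Pi\text{-connected}}\le \tau_{1\text{-}\Pi}$ separately, following the overall template used for Theorem~\ref{Thm1}. For the easy lower bound I would argue that any step $i<\tau_{1\text{-}\Pi}$ exhibits a local obstruction to $\Pi$-connectivity: either an isolated vertex (no $\Pi$-colored path can reach it) or a vertex whose local color pattern is insufficient to route $\Pi$-colored paths through it in any offset. In both cases the isolated-vertex / one-vertex-deletion calculation familiar from the hitting time of ordinary connectivity, together with the same ``fits $\Pi$'' observation that justified $\tau_{\text{fit-}\Pi}\le \tau_\Pi$ in Theorem~\ref{Thm1}, shows w.h.p.\@ that $G_i^r$ is not $\Pi$-connected.

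The main work is the upper bound. I would use a standard two-round (sprinkling) exposure: fix $m=\tau_{1\text{-}\Pi}$, reveal $G_{m-\Theta(n)}^r$ first, and then the remaining set $E^*$ of $\Theta(n)$ independently colored edges. Inside the bulk I would run a P\'osa-style rotation/extension argument adapted to colors, in parallel across the $\ell$ possible offsets $l\in\{0,\dots,\ell-1\}$, to produce a ``core'' $C\subseteq V$ with the property that for every pair $u,v\in C$ and every offset $l$ there is a $\Pi$-colored $u$-$v$ path of phase $l$ contained in $C$. The adaptation is that an endpoint swap in the rotation argument is only allowed along an edge whose color matches the next symbol of $\Pi$ prescribed by the current phase; since $r=O(1)$, each color class restricted to the bulk behaves like a $G_{n,p}$ with $p=\Theta(\log n/n)$, which is dense enough to support the standard expansion estimates in each of the $\ell$ parallel processes. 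Vertices outside $C$ — including the leaves permitted by $\tau_1$ and any vertex only marginally pattern-compatible at time $m$ — are few and mutually well-separated, and I would use the sprinkled edges $E^*$ to attach each such $v$ to $C$ by a short $\Pi$-colored path in every offset that the condition defining $\tau_{1\text{-}\Pi}$ allows at $v$. A $\Pi$-colored $u$-$v$ path between any two vertices is then assembled by concatenating a sprinkled prefix at $u$, a bulk path in $C$ of the matching phase, and a sprinkled suffix at $v$.

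The main obstacle is the colored rotation/extension argument inside the bulk, together with the coordination between the phase of a bulk path and the phases at which non-core vertices can be attached by sprinkled edges. The classical P\'osa argument moves an endpoint along an arbitrary unused edge; here it must move it along an edge of a prescribed color, and the prescription rotates through the $\ell$ symbols of $\Pi$ as the path grows. The coordination issue is handled by the ``all-phases-covered'' property of $C$: because bulk paths are produced in every offset for every pair in $C$, the offset can always be matched to whichever sprinkled prefix/suffix a non-core endpoint admits. Since $r$ and $\ell$ are $O(1)$, the factor $\ell^{O(1)}$ blow-up in the union bounds over phases and local color configurations is affordable, and the proof reduces to verifying the (by now standard) expansion and attachment estimates inside each of finitely many color classes.
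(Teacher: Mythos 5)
The high-level shape of your argument (lower bound by local obstruction; upper bound by building a large $\Pi$-colored ``core'' and attaching the remaining vertices to it by short $\Pi$-colored stubs, then concatenating prefix $+$ core path $+$ suffix) is the same as the paper's. The paper builds a single long $\Pi$-colored cycle $C$ on the good vertices via Lemma~\ref{pcycle} and attaches each remaining vertex $v$ to $C$ by a ``spike'' $v,v_1,\{v_2,v_3\}$ with edge colors $\Pi_{k(v)},\Pi_{k(v)-1},\Pi_{k(v)+1}$, and then routes a $\Pi$-colored $v$--$w$ path through a suitable arc of $C$. So in outline your plan is compatible with theirs. The difference --- and the gap --- is in how the core structure is produced.

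You propose building the core by a P\'osa-style rotation/extension argument adapted to colors. As described this does not work. A rotation replaces a path $v_0,\ldots,v_k$ by $v_0,\ldots,v_i,v_k,v_{k-1},\ldots,v_{i+1}$; this \emph{reverses} the segment $v_{i+1},\ldots,v_k$. If the old path is $\Pi$-colored with offset $l$, then after the rotation the edges $e_k,e_{k-1},\ldots,e_{i+2}$ appear in reverse order, and for the new path to be $\Pi$-colored one would need $\Pi_{k+l}=\Pi_{i+2+l}$, $\Pi_{k-1+l}=\Pi_{i+3+l}$, etc., which fails for a generic pattern $\Pi$ no matter what color you demand of the pivot chord $(v_k,v_i)$. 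Constraining the chord color only controls one new edge; it does nothing about the reversed block. The constraint you name (``an endpoint swap\ldots only along an edge whose color matches the next symbol'') is therefore not the binding one, and simply running $\ell$ ``parallel'' rotation processes, one per offset, does not fix it. The paper avoids this entirely: the long $\Pi$-colored cycle for Theorem~\ref{Thm1} (reused via Lemma~\ref{pcycle}) is assembled from color-$\Pi_i$ perfect matchings $M_i$ between consecutive blocks $V_i''\to V_{i+1}''$, and the one genuinely rotation-like step is pushed into an auxiliary digraph (Lemma~\ref{reduction}, one vertex per $\Pi$-colored path $P_i\in\cP$) in which all arcs have the single color $\Pi_\ell$, so no colored reversal problem arises; even there the argument uses the \emph{directed} double-rotation machinery of~\cite{HamFrieze}, which is designed precisely because ordinary P\'osa rotation reverses orientation. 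You would need to replace the colored-P\'osa step by either the matching/auxiliary-digraph construction of the paper or some other rotation scheme that provably preserves $\Pi$-colorability; until that is done, the ``bulk core'' you want does not exist.

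A smaller point: you do not actually need the very strong ``all offsets between all pairs in $C$'' property. The paper gets by with one $\Pi$-colored cycle $C$, because a simple cycle of length $h$ divisible by $\ell$ already provides, between any $x,y\in C$, a $\Pi$-colored path of every residue of length mod $\ell$ (go one way or the other around, or extend the spikes suitably); the offset of the attached prefix at $v$ and suffix at $w$ is then absorbed by choosing the right arc. Aiming for the all-offsets property per pair is both harder to obtain and unnecessary.
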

The following corollary is then immediate from our knowledge of $\tau_{1-\Pi}$ .
\begin{cor}
Let $m=\frac12n(\log n+c_n)$. Then,
$$\lim_{\substack{n\to\infty\\\ell|n}}\Pr( G_i^r \text{ is $\Pi$-connected})=\begin{cases}0&c_n\to -\infty.\\e^{-e^{-c}}&c_n\to c.\\1&c_n\to +\infty.\end{cases}
$$
\end{cor}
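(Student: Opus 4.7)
The plan is a short reduction to the classical Erd\H{o}s--R\'enyi minimum-degree-$1$ threshold. First I would invoke Theorem~\ref{Thm2}: with high probability $\tau_{\Pi\text{-connected}}=\tau_{1\text{-}\Pi}$, and so, writing $m=\tfrac{1}{2}n(\log n+c_n)$,
$$\Pr(G_m^r\text{ is }\Pi\text{-connected})=\Pr(\tau_{1\text{-}\Pi}\le m)+o(1).$$
This replaces the global event of $\Pi$-connectivity by a purely local hitting-time condition at individual vertices.

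Second, I would check that at this scale of $m$ the event $\{\tau_{1\text{-}\Pi}\le m\}$ coincides with $\{\delta(G_m)\ge 1\}$ up to an $o(1)$ additive error in probability. Because every color of $[r]$ appears in $\Pi$ and $r=O(1)$, the ``$\Pi$''-qualifier at a vertex $v$ of positive degree demands only that $v$ has an incident edge whose color can appear as some $\Pi_j$, which is automatic, together with possibly a constant-probability color constraint on a few incident edges when $d(v)$ is small. Concretely, I would run a first-moment argument over low-degree vertices, using the well-known estimate that the number of vertices of bounded degree at $m\sim\tfrac{1}{2}n\log n$ is of order $O(n/\log n)$, and that each such vertex fails the color condition with probability at most a constant strictly less than one. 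This shows that, conditional on $\delta(G_m)\ge 1$, the extra ``$\Pi$'' constraint holds w.h.p., and conversely $\tau_{1\text{-}\Pi}\ge \tau_1$ is immediate from definition.

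Third, I would invoke the classical minimum-degree-$1$ result for $G_{n,m}$: for $m=\tfrac{1}{2}n(\log n+c_n)$,
$$\lim_{n\to\infty}\Pr(\delta(G_m)\ge 1)=\begin{cases}0 & c_n\to -\infty,\\ e^{-e^{-c}} & c_n\to c,\\ 1 & c_n\to +\infty,\end{cases}$$
and then combine the three steps to conclude the corollary.

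The main obstacle is the second step: pinning down exactly what the ``$\Pi$'' qualifier in $\tau_{1\text{-}\Pi}$ requires beyond positive degree and verifying that it adds no new obstruction at the min-degree-$1$ scale. The argument is routine once one observes the key qualitative point that since $[r]=\set{\Pi_1,\ldots,\Pi_\ell}$ a single incident edge always suffices to begin a $\Pi$-colored path, so that isolation is the only dominant local obstruction and the color statistics of low-degree vertices contribute only $o(1)$ to the failure probability.
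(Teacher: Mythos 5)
Your overall reduction is the same one the paper has in mind: apply Theorem~\ref{Thm2} to transfer the question to the hitting time $\tau_{1-\Pi}$, then invoke the classical Erd\H{o}s--R\'enyi minimum-degree-one distribution for $G_{n,m}$. The issue is your second step, which is both unnecessary and, as sketched, would not work. In the paper's notation $\tau_{1-\Pi}$ is (despite the opaque subscript) just $\tau_1$, the minimum-degree-one hitting time defined immediately before Theorem~\ref{Thm2}; there is no additional colour constraint at a degree-one vertex because, as you yourself observe at the end, every colour of $[r]$ appears in $\Pi$, so a single incident edge of any colour already forms a length-one $\Pi$-coloured path. Thus $\{\tau_{1-\Pi}\le m\}$ and $\{\delta(G_m)\ge 1\}$ are the \emph{same} event, not merely equal up to $o(1)$, and no first-moment calculation is needed. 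Taken literally, your first-moment sketch would also fail: at $m\sim\tfrac12 n\log n$ the number of vertices of any fixed bounded degree $K$ is of order $(\log n)^K$, not $O(n/\log n)$, so a bound of the form (number of low-degree vertices) $\times$ (constant failure probability) does not tend to zero. Delete step two, and the corollary follows directly from Theorem~\ref{Thm2} together with the standard statement $\Pr(\delta(G_{n,m})\ge 1)\to e^{-e^{-c}}$ for $m=\tfrac12 n(\log n+c_n)$ with $c_n\to c$, and the $0$ and $1$ limits in the other two regimes.
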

\subsection{Directed versions}
There some natural directed versions of the results that we just have just stated. 
For that we consider the directed random graph process $D_0,D_1,\ldots,D_{N'}$, $N'=n(n-1)$. Here
$e_i=E(D_{i})\setminus E(D_{i-1})$ is chosen at random from all the  $n(n-1)-(i-1)$ arcs not present in $E(D_{i-1})$.
For $i\in [N']$, at step $i$, a random color $c_i$  is chosen independently and uniformly at random from $[r]$ and is assigned to $e_i$. We denote this randomly colored version of the directed random graph process by $D_0^r,D_1^r$,\ldots,$D_{N'}^r$.

The notion of $\Pi$ paths/cycle/connectivity can be extended in a straightforward manner to the directed setting by substituting directed path/ directed cycle in the place of cycle/paths. For $v\in V$ we say that $v$ fits the pattern $\Pi$ if there exist arcs
$e_1,e_2\in E(D)$  and $1\leq j<\ell$  such that 
 $e_1$ has color $\Pi_i$, $e_2$ has color $\Pi_{i+1}$ and such that the head of $e_1$ and the tail of $e_2$ are both $v$. 
We replace connectivity in the undirected setting by strong connectivity in the directed case.
Finally let $\overrightarrow{ \tau}_{\Pi}$, $\overrightarrow{ \tau}_{fit-\Pi}$, $\overrightarrow{ \tau}_{\Pi-connected}$ and $\overrightarrow{ \tau}_{1}$ 
be the directed analogs of $\tau_{\Pi}$, $\tau_{fit-\Pi}$, $\tau_{\Pi-connected}$ and $\tau_{1}$.
\begin{thm}\label{Thm3}
Let $r=O(1)$ and $\Pi$ be an $[r]$-pattern. Then, w.h.p.\@
 $$\overrightarrow{\tau}_{fit-\Pi}=\overrightarrow{\tau}_{\Pi}.$$
\end{thm}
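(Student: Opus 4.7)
The inequality $\overrightarrow{\tau}_{fit-\Pi}\le \overrightarrow{\tau}_{\Pi}$ is immediate from the definitions: on any $\Pi$-coloured Hamilton directed cycle each vertex $v$ is the head of an in-arc of colour $\Pi_j$ and the tail of an out-arc of colour $\Pi_{j+1}$ for some $j$, so $v$ fits $\Pi$. The substantive task is the reverse inequality, which I would prove by mirroring the proof of Theorem~\ref{Thm1} with the natural directed modifications throughout.

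The first step is to pin down the asymptotics and the local structure of the colour-digraph at time $\overrightarrow{\tau}:=\overrightarrow{\tau}_{fit-\Pi}$. A vertex $v$ fails to fit $\Pi$ iff, for every consecutive colour pair $(\Pi_j,\Pi_{j+1})$ of $\Pi$, either $v$ has no in-arc of colour $\Pi_j$ or no out-arc of colour $\Pi_{j+1}$. A standard Poissonisation/coupon-collector analysis should concentrate $\overrightarrow{\tau}$ around $n(\log n + (k-1)\log\log n + \Theta(1))$ for an integer $k=k(\Pi,r)$ read off from the combinatorics of $\Pi$. At this point I would verify w.h.p.\ that only $n^{o(1)}$ vertices are \emph{bare} (realising the fit through essentially a unique arc pair), that bare vertices lie pairwise at graph distance $\ge 10\log\log n$ in the underlying digraph, and that every non-bare vertex has $\Theta(\log n)$ in-arcs and out-arcs of every colour appearing in $\Pi$.

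With this structural picture in hand, I would establish a $\Pi$-respecting directed analogue of the P\'osa rotation. If $P=v_0v_1\cdots v_t$ is a $\Pi$-coloured directed path at offset $s$ (so arc $(v_{i-1},v_i)$ has colour $\Pi_{s+i}$) and $v_t$ has an out-arc of colour $\Pi_{s+t+1}$ to some interior $v_i$, then by periodicity the rotation produces another $\Pi$-coloured directed path with the same start $v_0$ and new endpoint $v_{i-1}$ precisely when $\Pi_{s+i}=\Pi_{s+t+1}$, forcing $i\equiv t+1 \pmod{\ell}$. Thus rotations act inside residue classes modulo $\ell$; the hypothesis $\ell\mid n$ guarantees that the set of reachable endpoints is linearly large. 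The standard extension-rotation machinery then yields, starting from any fixed $v_0$, a $\Pi$-coloured directed path of length $n-o(n)$ with $\Theta(n)$ candidate final endpoints. A sprinkling of $O(1)$ further random arcs at time $\overrightarrow{\tau}$, coupled via reveal-in-stages, finally closes the path into a $\Pi$-coloured Hamilton directed cycle.

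Bare vertices are the principal obstacle, just as in Theorem~\ref{Thm1}: they have essentially no room to rotate. I would handle them with absorbing gadgets constructed in a reservoir phase well before $\overrightarrow{\tau}$---for every bare vertex $v$ a short $\Pi$-coloured directed sub-path through $v$ is pre-assembled that can be spliced into any sufficiently long $\Pi$-coloured path whose offset at the splice point matches. The most delicate point is that, under the pattern constraint, each rotation uses arcs of a \emph{single} colour, so one must preclude the rotation walk getting trapped in a small residue-class orbit and must guarantee that each bare vertex admits an absorber with a usable offset. Directedness roughly doubles the bookkeeping (in- and out-neighbourhoods tracked separately) but introduces no qualitatively new difficulty beyond Theorem~\ref{Thm1}.
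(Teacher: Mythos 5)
The paper disposes of Theorem~\ref{Thm3} in a few lines: the proof of Theorem~\ref{Thm1} is repeated essentially verbatim with the single structural change that the demand $d(\overrightarrow{\Pi})$ is computed over \emph{(colour, direction)} pairs rather than over colours, and with the random orientation of edges (used to define the $BAD_j$ sets) replaced by the intrinsic direction of each arc. In particular the paper's Theorem~\ref{Thm1} proof is \emph{not} a direct rotation--extension argument: it partitions $V$ into $V_1,\ldots,V_\ell$, covers bad vertices by short $\Pi$-coloured paths via \emph{CoverBAD}, builds a 2-factor of $O(\log n)$ $\Pi$-coloured cycles out of $\ell$ perfect matchings between consecutive classes, and only then invokes the reduction Lemma~\ref{reduction} (which imports the double-rotation machinery of Frieze's directed Hamilton cycle algorithm). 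You announce that you will ``mirror the proof of Theorem~\ref{Thm1}'' but then describe an entirely different plan based on directed P\'osa rotations plus absorbers; you should at least register that this departs from the paper's construction.

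More importantly, the rotation step you describe is not well-defined for directed paths, and this is a genuine gap rather than a bookkeeping matter. If $P=v_0\to v_1\to\cdots\to v_t$ is a directed path and you add the arc $(v_t,v_i)$ and delete the arc $(v_{i-1},v_i)$, the result is a directed path $v_0\to\cdots\to v_{i-1}$ \emph{together with a disjoint directed cycle} $v_i\to\cdots\to v_t\to v_i$; it is not a longer path with new endpoint $v_{i-1}$. This is precisely the obstruction that forces the directed setting to use ``double rotations'' (two coordinated exchanges that restore a single path), which is what the paper imports via Lemma~\ref{reduction} and which is laid out in \cite{HamFrieze} and \cite{2016Anastos}. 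Your observation that rotations respect residue classes mod $\ell$ under the pattern constraint is correct and useful, but without replacing the single rotation by the correct double-rotation mechanism (and re-verifying the residue-class compatibility for it), the endpoint-expansion argument cannot get off the ground. A secondary, smaller inaccuracy: the hitting time $\overrightarrow{\tau}_{fit-\Pi}$ concentrates around $\frac{r}{d(\overrightarrow{\Pi})}\,n\log n$ (see the directed analogue of Lemma~\ref{boundssub}); there is no $(k-1)\log\log n$ correction, since the fit condition is a degree-$\ge 1$ condition in appropriate (colour, direction) subgraphs, not a degree-$\ge k$ condition.
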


\begin{thm}\label{Thm4}
Let $r=O(1)$ and $\Pi$ be an $[r]$-pattern. Then w.h.p.\@
$$\overrightarrow{\tau}_{\Pi-connected}= \overrightarrow{\tau}_{1}.$$
\end{thm}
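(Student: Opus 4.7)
\textbf{Plan for Theorem \ref{Thm4}.}
The inequality $\overrightarrow{\tau}_1\le \overrightarrow{\tau}_{\Pi-connected}$ is immediate: $\Pi$-connectivity of $D_i^r$ forces every vertex to be the starting point and the endpoint of some directed $\Pi$-coloured path, and hence to have both an out-arc and an in-arc. I therefore concentrate on proving $\overrightarrow{\tau}_{\Pi-connected}\le \overrightarrow{\tau}_1$ w.h.p. Set $m^\star=\overrightarrow{\tau}_1$. Classical arguments for the random digraph process give that, w.h.p., $m^\star=n\log n + O(n)$, $D_{m^\star}$ is strongly connected, at most $O(\log n)$ vertices have in-degree $1$, at most $O(\log n)$ have out-degree $1$, and no vertex has both in- and out-degree equal to $1$. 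Thus every low-in-degree vertex has out-degree $\Omega(\log n)$ and vice versa; since $r=O(1)$ and $[r]=\{\Pi_1,\dots,\Pi_\ell\}$, a union bound shows that w.h.p.\ every vertex of $D_{m^\star}^r$ \emph{fits} the pattern in the directed sense, i.e.\ admits in- and out-arcs whose colour pair lies in $S=\{(\Pi_j,\Pi_{j+1}):1\le j\le \ell\}$. Hence $\overrightarrow{\tau}_{fit-\Pi}=\overrightarrow{\tau}_1$ w.h.p.

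To promote ``strong connectivity plus fitting'' into $\Pi$-connectivity, I would introduce the auxiliary digraph $H$ on vertex set $V\times\{1,\dots,\ell\}$ with arc $(u,j)\to(v,(j\bmod\ell)+1)$ for every arc $(u,v)\in E(D_{m^\star}^r)$ of colour $\Pi_j$. A $\Pi$-coloured directed path in $D_{m^\star}^r$ from $u$ to $v$ is exactly a directed path in $H$ from some $(u,j)$ to some $(v,j')$, so the task reduces to proving that, for every $u\ne v\in V$, the set $\{u\}\times\{1,\dots,\ell\}$ reaches $\{v\}\times\{1,\dots,\ell\}$ in $H$. I would establish this by a two-round exposure, revealing most of the arcs of $D_{m^\star}^r$ in round one (say $m_1$ arcs, with $m^\star-m_1=\omega(n)$) and the remainder in round two. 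After round one, extract a core $\mathcal{C}\subseteq V$ with $|\mathcal{C}|=n-o(n)$ whose vertices have $\Omega(\log n)$ in-arcs and $\Omega(\log n)$ out-arcs of every colour, and prove that $H[\mathcal{C}\times\{1,\dots,\ell\}]$ is strongly connected by a small-set expansion argument. Round two then supplies, for each $v\notin\mathcal{C}$, in- and out-arcs of compatible colours whose other endpoints lie in $\mathcal{C}$, attaching $v$ into $H[\mathcal{C}\times\{1,\dots,\ell\}]$.

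The main obstacle is the core-expansion step, because $H$ is not itself a standard random digraph: a single arc of $D^r$ of colour $\Pi_j$ contributes only to the bipartite graph between layers $j$ and $j+1$ of $H$, and the $\ell$ layers are coupled through the colour assignments. I would handle this by conditioning on the partition of the arcs of $D_{m_1}^r$ by colour, after which each layer-to-layer bipartite graph is generated by an independent uniform sample of about $m_1/r$ arcs, density $\Theta(\log n/n)$, and the standard Erd\H{o}s--R\'enyi expansion argument applies inside each colour class: every $S\subseteq\mathcal{C}\times\{1,\dots,\ell\}$ with $|S|\le|\mathcal{C}|\ell/2$ satisfies $|N^+_H(S)\setminus S|\ge(1+\Omega(1))|S|$, and symmetrically for in-neighbourhoods. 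Composing expansions across the $\ell$-cycle of layers yields strong connectivity of $H[\mathcal{C}\times\{1,\dots,\ell\}]$ after $O(\log n)$ steps, and the attachment of vertices outside $\mathcal{C}$ is routine since $|V\setminus\mathcal{C}|=o(n)$ and the fitting of $v$ already provides a compatible in/out-arc pair whose other endpoints lie in $\mathcal{C}$ w.h.p.\ under the round-two randomness.
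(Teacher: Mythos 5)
Your intermediate claim that $\overrightarrow{\tau}_{fit-\Pi}=\overrightarrow{\tau}_1$ w.h.p.\ is incorrect, and this is a genuine error rather than a cosmetic one. It is true that $d(\overrightarrow{\Pi})=r$ for every directed pattern (for each colour $c$, any $S\in\cD(\overrightarrow\Pi)$ must cover the constraint at an index $j$ with $\Pi_j=c$ and so must contribute $(c,+)$ or $(c,-)$ to the image), which forces $\overrightarrow{\tau}_{fit-\Pi}=(1+o(1))\overrightarrow{\tau}_1$. But the two hitting times do \emph{not} coincide w.h.p.: for any proper nonempty $A\subset[r]$ one can choose $S\in\cD(\overrightarrow\Pi)$ whose image is exactly $\set{(c,+):c\in A}\cup\set{(c,-):c\notin A}$, and the expected number of vertices at time $n(\log n+c)$ missing all out-arcs with colours in $A$ and all in-arcs with colours in $[r]\setminus A$ is $\sim e^{-c}$ — the same order as the number of vertices of out-degree $0$. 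Such a ``mixed-$\sigma$'' vertex has positive in- and out-degree yet fails to fit, and it survives $\overrightarrow{\tau}_1$ with probability bounded away from $0$. Your ``no vertex has both in- and out-degree $1$'' observation does not rule this out because the bad event involves a \emph{colour} imbalance, not a degree imbalance: a vertex can have out-degree $\Theta(\log n)$ yet miss a single colour on the out-side while missing a complementary single colour on the in-side, with overall probability $\approx n^{-1}$. Fortunately the claim is also unnecessary: in your own auxiliary-digraph framework, to attach $v\notin\cC$ into $H[\cC\times[\ell]]$ you only need some $(v,j)$ to have an in-arc from the core and some $(v,j')$ to have an out-arc to the core, and there is no requirement that $j'=j+1$. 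Having in- and out-degree at least $1$, together with $[r]=\set{\Pi_1,\dots,\Pi_\ell}$, already gives both, because the endpoint of a $\Pi$-coloured path need not ``fit''. You should therefore drop the fitting claim entirely and argue attachment directly from minimum degree.

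With that repair, your route via the $\ell$-layered auxiliary digraph $H$ and a two-round exposure is a genuinely different proof strategy from the paper's. The paper instead reuses its Hamilton-cycle machinery (Lemma~\ref{pcycle}) to build one long $\Pi$-coloured cycle $C$ covering almost all good vertices, then attaches each remaining vertex $v$ to $C$ via a ``spike'' $v,v_1,v_2/v_3$ whose colours are arranged so that one can enter and exit $C$ at $v_2,v_3$; a $\Pi$-coloured $u$--$w$ path is then read off by walking around $C$. The cycle-plus-spikes approach has the advantage of piggy-backing on an already-proved structural lemma, so the only new work is the spike construction. Your expansion approach avoids building a single spanning cycle, which is conceptually cleaner, but the step you flag as the ``main obstacle'' — proving strong connectivity of $H[\cC\times[\ell]]$ from layer-to-layer expansion — is the real content of the proof and is left undeveloped. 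Conditioning on the colour partition does make the $\ell$ bipartite layers conditionally independent, but you still need to compose one-step out-expansions around the whole $\ell$-cycle of layers (and similarly for in-expansions) while controlling how the core $\cC$ is chosen so that the ``$\Omega(\log n)$ arcs of every colour'' property is uniform; that requires a careful argument, not just a citation of the standard Erd\H{o}s--R\'enyi expansion lemma.
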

\begin{cor}
Let $m=n(\log n+c)$. Then,
$$\lim_{\substack{n\to\infty\\\ell|n}}\Pr( G_i^r \text{ is $\Pi$-connected})=\begin{cases}0&c_n\to -\infty.\\ e^{-2e^{-c}}   &c_n\to c.\\1&c_n\to +\infty.\end{cases}
$$
\end{cor}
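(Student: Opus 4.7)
The plan is to reduce the corollary to the classical Poisson hitting-time calculation for the minimum in- and out-degree of the directed random graph, and then invoke Theorem~\ref{Thm4}. Since that theorem gives $\overrightarrow{\tau}_{\Pi-connected}=\overrightarrow{\tau}_1$ w.h.p., and $\overrightarrow{\tau}_1\leq m$ iff every vertex of $D_m$ has in-degree and out-degree at least $1$, the corollary reduces to
\begin{align*}
\lim_{n\to\infty}\Pr(\overrightarrow{\tau}_1\leq m)=\lim_{n\to\infty}\Pr(X_m=0),
\end{align*}
where $X_m=\bigl|\{v\in[n]:d^{-}_{D_m}(v)=0 \text{ or } d^{+}_{D_m}(v)=0\}\bigr|$.

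The natural approach is a Poisson approximation in the Bernoulli digraph model $D_{n,p}$ with $p=m/(n(n-1))$; the transfer from $D_m$ to $D_{n,p}$ costs only $o(1)$ in hitting-time style events by the usual monotonicity sandwich. At a fixed vertex $v$ the in-arcs and out-arcs at $v$ are disjoint, so $\{d^{-}(v)=0\}$ and $\{d^{+}(v)=0\}$ are independent, each of probability $(1-p)^{n-1}=(1+o(1))e^{-c_n}/n$. Inclusion-exclusion then yields
\begin{align*}
\E{X_m}=n\bigl(2(1-p)^{n-1}-(1-p)^{2(n-1)}\bigr)=(2+o(1))e^{-c_n},
\end{align*}
since the ``doubly bad'' correction $(1-p)^{2(n-1)}$ is only $O(e^{-2c_n}/n^2)$, contributing $o(1)$ after summing over the $n$ vertices.

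For the extreme cases, when $c_n\to +\infty$ Markov gives $\Pr(X_m\geq 1)\leq \E{X_m}=o(1)$, so the limit is $1$; when $c_n\to -\infty$, a standard second-moment computation --- tracking the $O(1)$ arcs shared between the bad events at two distinct vertices --- gives $\va(X_m)=(1+o(1))\E{X_m}$, and Chebyshev produces $\Pr(X_m=0)=o(1)$, so the limit is $0$.

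For the main case $c_n\to c$, I would settle the Poisson limit via Brun's sieve. For fixed $k$ and distinct $v_1,\dots,v_k$, the $k$ bad events share only the $O(k^2)$ arcs with both endpoints in $\{v_1,\dots,v_k\}$, a negligible fraction of the $\Theta(kn)$ arcs involved, so
\begin{align*}
\E{X_m(X_m-1)\cdots(X_m-k+1)}=(1+o(1))(2e^{-c})^k.
\end{align*}
Brun's sieve then yields $X_m\Rightarrow\mathrm{Poisson}(2e^{-c})$, whence $\Pr(X_m=0)\to e^{-2e^{-c}}$. The main obstacle is entirely bookkeeping: carefully tracking both the $O(k^2)$ arc overlaps between distinct $v_i$ and the doubly-bad per-vertex correction, so that each contributes only to lower-order terms in the factorial moments. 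Once this is in hand, the argument runs in complete parallel to the classical Erd\H{o}s--R\'enyi minimum-degree hitting time proof, modulo the replacement of ``degree $\geq 1$'' by ``in- and out-degree both $\geq 1$,'' which is exactly what accounts for the factor of $2$ in the Poisson mean.
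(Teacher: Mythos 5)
Your proposal is correct and follows exactly the route the paper intends: reduce via Theorem~\ref{Thm4} to $\Pr(\overrightarrow{\tau}_1\leq m)$, which is the classical Poisson hitting-time computation for minimum in- and out-degree at least one, yielding mean $2e^{-c}$ and hence the limit $e^{-2e^{-c}}$. The paper itself omits this standard method-of-moments argument (noting only that the analogous corollaries are ``immediate'' and their proofs are omitted), so your filling-in of the factorial-moment bookkeeping and the monotone-sandwich transfer from $D_m$ to $D_{n,p}$ is precisely what a complete proof would require.
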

\subsection{Notation-Preliminaries}\label{1.2}
For $i\in [N]$ and $c\in [r]$ let $G_i^r(c)$ denote the subgraph of $G_i^r$ induced by the edges of color $c$. Furthermore denote by ${\text deg}_{i}(v,c)$ the degree of $v$ in $G_i^r(c)$. 
By extension, for $C\subset [r]$ denote by $G_i^r(C)$  the subgraph of $G_i^r$ induced by the edges with color in $C$ and set ${\text deg}_{i}(v,C)$ to be the degree of $v$ in $G_i^r(C)$.
For $e\in E(G_N)$ let $c(e)$ be the color that is assigned to $e$ by the end of the process.
Throughout the paper we will use the following estimate.
\begin{lem}\label{est}
Let $a,b,c,d,i,t,Q\in \mathbb{Z}_{\geq 0}$ be such that $b,c,i,t=o(Q)$, $d,i=o(t)$ and $i=o(b)$ then,
$$\frac{\binom{a}{i}\binom{Q-c-b}{t-d-i}}{ \binom{Q-c}{t-d} } \leq \bigg(\frac{3at}{iQ}\bigg)^i
\exp\set{-\frac{(1+o(1))bt}{Q}}. $$
\end{lem}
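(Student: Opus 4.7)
The plan is to decompose the binomial ratio into two pieces, one that contributes the $(3at/(iQ))^i$ factor and one that contributes the exponential. Write
\[
\frac{\binom{a}{i}\binom{Q-c-b}{t-d-i}}{\binom{Q-c}{t-d}} = \binom{a}{i}\cdot\frac{\binom{Q-c-b}{t-d-i}}{\binom{Q-c-b}{t-d}}\cdot\frac{\binom{Q-c-b}{t-d}}{\binom{Q-c}{t-d}}.
\]
The point is that the middle factor captures the ``choose $i$ of the $t-d$ places, but on the smaller ground set'' and produces the polynomial-in-$a/Q$ factor, while the rightmost factor compares two $(t-d)$-subsets on ground sets differing by $b$ and produces the exponential decay.

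For the rightmost factor I would use the classical identity
\[
\binom{Q-c}{t-d}\binom{Q-c-t+d}{b}=\binom{Q-c}{b}\binom{Q-c-b}{t-d},
\]
which gives $\binom{Q-c-b}{t-d}/\binom{Q-c}{t-d}=\prod_{j=0}^{b-1}(1-(t-d)/(Q-c-j))$. Since $b,c=o(Q)$ and $d=o(t)$, an application of $1-x\le e^{-x}$ yields the bound
\[
\prod_{j=0}^{b-1}\left(1-\frac{t-d}{Q-c-j}\right)\le \exp\Bigl\{-\tfrac{(1-o(1))(t-d)\, b}{Q}\Bigr\}=\exp\Bigl\{-\tfrac{(1+o(1))bt}{Q}\Bigr\},
\]
since $(t-d)/t = 1-o(1)$.

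For the middle factor, writing it as a ratio of falling factorials gives
\[
\frac{\binom{Q-c-b}{t-d-i}}{\binom{Q-c-b}{t-d}}=\prod_{k=1}^{i}\frac{t-d-i+k}{Q-c-b-t+d+k}\le\left(\frac{t}{Q-c-b-t+d}\right)^{\!i}\le\left(\frac{(1+o(1))t}{Q}\right)^{\!i},
\]
using $c,b,t=o(Q)$ and $d,i=o(t)$. Combining with the standard estimate $\binom{a}{i}\le (ea/i)^i$ produces a factor of
\[
\left(\frac{e\,(1+o(1))\,a t}{iQ}\right)^{\!i}\le\left(\frac{3at}{iQ}\right)^{\!i}
\]
once $n$ (hence $Q$) is large enough, since $e(1+o(1))<3$, and $i=o(b)$ together with $i=o(t)$ ensure the error factors $(1+o(1))^i$ can be absorbed into the constant 3. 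Multiplying the three bounds yields the claim.

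The main thing to be careful about is the bookkeeping of the various $o(1)$ terms: one must ensure that the $(1+o(1))^i$ errors coming from the $i$-fold product in the middle factor do not overwhelm the gap between $e$ and $3$, and that the $-(1+o(1))bt/Q$ in the exponent is correct. Both follow directly from the hypotheses $i=o(b)$, $d=o(t)$, and $b,c,t=o(Q)$, so no new probabilistic input is needed; the lemma is purely arithmetic.
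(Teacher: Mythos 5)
Your proof is correct and takes a genuinely different route from the paper's. You decompose the ratio telescopically as
\[
\binom{a}{i}\cdot\frac{\binom{Q-c-b}{t-d-i}}{\binom{Q-c-b}{t-d}}\cdot\frac{\binom{Q-c-b}{t-d}}{\binom{Q-c}{t-d}},
\]
bound $\binom{a}{i}\le(ea/i)^i$ directly, and extract the exponential from the $b$-fold product over the rightmost factor. The paper instead multiplies numerator and denominator by $\binom{t-d}{i}$ and uses the subset-of-subset identity $\binom{Q-c}{t-d}\binom{t-d}{i}=\binom{Q-c}{i}\binom{Q-c-i}{t-d-i}$ to rewrite the quantity as $\binom{t-d}{i}\cdot\tfrac{\binom{a}{i}}{\binom{Q-c}{i}}\cdot\tfrac{\binom{Q-c-b}{t-d-i}}{\binom{Q-c-i}{t-d-i}}$; there the $(et/i)^i$ factor comes from $\binom{t-d}{i}$, and the exponential from the $(t-d-i)$-fold product over the last factor, whose typical term equals $1-\tfrac{b-i}{Q-c-i-h}$. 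Because that numerator is $b-i$ rather than $b$, the paper's route genuinely uses the hypothesis $i=o(b)$ to replace $b-i$ by $(1-o(1))b$; your decomposition never pairs $i$ against $b$, so you do not use $i=o(b)$ at all, which is a small gain in robustness. One minor nit: your closing remark that ``$i=o(b)$ together with $i=o(t)$ ensure the error factors $(1+o(1))^i$ can be absorbed into the constant $3$'' misdescribes the absorption. The $(1+o(1))$ sits inside the base of the $i$-th power, so all that is needed is $e(1+o(1))\le 3$ for $n$ large, irrespective of how fast $i$ grows; neither $i=o(b)$ nor $i=o(t)$ enters that step (though $i=o(t)$ is of course used to keep $t-d-i$ positive so the middle binomial is well-defined).
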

\begin{proof}
\begin{align*}
\frac{\binom{a}{i}\binom{Q-c-b}{t-d-i}}{\binom{Q-c}{t-d}} 
&=\frac{\binom{a}{i}\binom{Q-c-b}{t-d-i} \binom{t-d}{i}}{\binom{Q-c}{i}\binom{Q-c-i}{t-d-i} }
= \binom{t-d}{i}\prod_{j=0}^{i-1}\frac{a-j}{Q-c-j} \prod_{h=0}^{t-d-i-1}\frac{Q-c-b-h}{Q-c-i-h}\\
&\leq \bigg(\frac{et}{i} \bigg)^i \cdot \prod_{j=0}^{i-1}\frac{a}{Q-c} \prod_{h=0}^{t-d-i-1} \brac{1-\frac{b}{Q-c-i-h}}\\
&\leq \bigg(\frac{et}{i} \bigg)^i \bigg(\frac{(1+o(1))a}{Q} \bigg)^i \bigg(1- \frac{(1+o(1))b}{Q}\bigg)^{(1+o(1))t}\\
&\leq  \bigg(\frac{(1+o(1))eat}{iQ} \bigg)^i \exp\set{-\frac{(1+o(1))bt}{Q}}.
\end{align*}
\end{proof}
We will also use the following elementary result.
\begin{lem}\label{binomial}
Let $\nu=\nu(n)$ be a positive integer and let $0<p<1$ be such that $\nu p\to \infty$ and let $X$ be a $Binomial(\nu,p)$ random variable. Then w.h.p.\@ $X=(1+o(1))\nu p$.
\end{lem}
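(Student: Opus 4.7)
The plan is to invoke a standard Chernoff-type concentration bound for the binomial distribution and then choose the deviation parameter carefully so that it tends to $0$ while the bound still decays to $0$.

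First I would recall the multiplicative Chernoff inequalities for $X \sim \text{Binomial}(\nu,p)$ with mean $\mu=\nu p$: for any $0<\delta<1$,
\begin{equation*}
\mathbf{P}\{X\geq (1+\delta)\mu\}\leq \exp\!\left(-\tfrac{\delta^2 \mu}{3}\right),\qquad
\mathbf{P}\{X\leq (1-\delta)\mu\}\leq \exp\!\left(-\tfrac{\delta^2 \mu}{2}\right).
\end{equation*}
These are the standard versions (see e.g.\ any probabilistic methods textbook) and require no hypothesis beyond $X$ being binomial.

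Next, I would let $\delta=\delta(n)\to 0$ sufficiently slowly so that $\delta^2\mu\to\infty$; concretely, picking $\delta=(\nu p)^{-1/3}$ works, since then $\delta^2\mu=(\nu p)^{1/3}\to\infty$ by the assumption $\nu p\to\infty$. With this choice, both tail probabilities above are bounded by $\exp(-\tfrac{1}{3}(\nu p)^{1/3})=o(1)$, giving
\begin{equation*}
\mathbf{P}\{|X-\nu p|\geq \delta \nu p\}=o(1),
\end{equation*}
which is precisely the assertion $X=(1+o(1))\nu p$ w.h.p.

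There is essentially no obstacle here — the lemma is a textbook consequence of Chernoff's bound. The only (minor) point to get right is that the deviation $\delta$ must be taken as a function of $n$ tending to $0$, rather than as a fixed constant, in order to produce the $(1+o(1))$ conclusion rather than a $(1+\epsilon)$ conclusion for arbitrary constant $\epsilon$; as shown above, any $\delta$ with $\delta\to 0$ and $\delta^2 \nu p\to\infty$ will do.
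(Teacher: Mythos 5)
Your proof is correct. The paper states Lemma~\ref{binomial} as an elementary fact and gives no proof of its own, so there is nothing to compare against; your argument via multiplicative Chernoff bounds with a slowly vanishing deviation parameter $\delta=(\nu p)^{-1/3}$ (so that $\delta\to 0$ but $\delta^2\nu p\to\infty$) is the standard way to establish it, and the details check out. One could equally well get away with Chebyshev's inequality, since $\Pr\{|X-\nu p|\geq\delta\nu p\}\leq \nu p(1-p)/(\delta\nu p)^2\leq 1/(\delta^2\nu p)$, which is $o(1)$ for the same choice of $\delta$; this is slightly more elementary and requires no exponential tail bound, but your Chernoff route is just as valid here.
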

\begin{rem}\label{gnm}
Let $C\subset [r]$ and $t\geq n$. Then $G_t^r(C)$ is distributed as $G(n,m)$ i.e a random graph with $m$ edges chosen  at random from all $N$ edges. Here $m$ is the number of edges in $G_t^r(C)$ colored by a color in $C$. Since each edge is colored independently and at random, $m$ is distributed as a $Binomial(t,|C|/r)$ random variable. Therefore w.h.p.\@ $m=(1+o(1))|C|t/r$.
\end{rem}
The paper contains various constants that are used throughout. We collect them here for ease of reference:
\begin{itemize}
\item $r$ equals the number of colors available.
\item $\ell$ equals the length of pattern $\Pi$.
\item $\e=10^{-9}\ell^{-1}$.
\item $t_i=i\m$ where $\m=\e n\log n$ for $1\leq i\leq 2\ell$.
\item $X_i=\set{e_j:\;j\in [(i-1)\m+1,i\m]}$ and $Y_i=\set{e_j:\;j\in [(\ell+i-1)\m+1,(\ell+i)\m]}$ for $i\in [\ell]$.
\item $V_i=\set{{\frac{(i-1)n}{\ell}+1},{\frac{(i-1)n}{\ell}+2},\ldots,{\frac{in}{\ell}}}$ for $1\leq i\leq \ell$, a partition of the vertex set into $\ell$ equal size subsets.
\item $n_\ell=\frac{n}{\ell}$ equals the size of the $V_i$'s. We use this notation to stop formulae looking too ugly.
\item $\b=10^{-3}\ell^{-1}r^{-1} \epsilon$.
\item $BAD$ is a set of low degree vertices with size bounded by $2\ell n_b$ where $n_b=n^{1-10\b}$.
\item $n_r=2\ell|BAD|$.
\end{itemize}
\section{Demand of a pattern}
For a given  $[r]$-pattern $\Pi$ the hitting time $\tau_{fit-\Pi}$ clearly depends on 
both the number of colors $r$ and the pattern its self. In order to determine the later we define the {\em demand} of a pattern. 
\begin{dfn}
Let 
$$\cD(\Pi):=\set{S\subset [\ell]:  \set{i,i+1} \cap  S \neq \emptyset \text{ for all } i\in [\ell]}. \hspace{10mm}(\ell+1=1\text{ here})   $$ 
\end{dfn}
\begin{dfn}
Let $r\in \mathbb{N}$ and let $\Pi$ be an $[r]$-pattern. The ``demand'' of $\Pi$ is 
$$ d(\Pi):= \min\set{|\set{\Pi_i:i\in S}|: S\in \cD(\Pi)}.$$
\end{dfn} 
The motive for giving the above definition is the following.
For a given  $[r]$-pattern $\Pi$,
if there exists a set $S\in \cD(\Pi)$ and a vertex $v\in V$ such that $v$ is not incident with any edge colored with one of the at least  $d(\Pi)$ colors in $\set{\Pi_i:i\in S}$ then $v$ does not fit $\Pi$. Conversely if $v$ does not fit $\Pi$ then for $i\in[\ell]$, $v$ is incident to at most 1 edge of color $P_i$ or to at most 1 edge of color $P_{i+1}$. Here we say at most one instead of none since we have to consider the case $\Pi_i=\Pi_{i+1}$. Therefore $S=[\ell]\backslash \set{\text{$i$:\;$v$ is incident with two edges of color $i$}}$ satisfies $|\{\Pi_i:i\in S\} |\geq d(\Pi)$ and certifies that $v$ does not fit $\Pi$.

\vspace{3mm}
In the following lemma we use the following well know result (see \cite{KarFr}). Let $\epsilon>0$ then, w.h.p. 
$G\big(n,\frac{(1-\epsilon)n\log n}{2}\big)$ contains an isolated vertex. On the other hand  $G\big(n,\frac{(1+\epsilon)n\log n}{2}\big)$ does not contain a vertex of degree at most $r$+1.
\begin{lem}\label{boundssub}
Let $r=O(1)$, $\Pi$ be an $[r]$-pattern and $\epsilon>0$.
 Then, w.h.p.\@  $$ \frac{r}{d(\Pi)}\cdot\frac{(1-\epsilon) n\log n}{2}  \leq \tau_{fit-\Pi} \leq   \frac{r}{d(\Pi)}\cdot  \frac{(1+\epsilon)n\log n}{2}.$$ 
\end{lem}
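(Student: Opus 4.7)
The plan is to reduce both bounds to the classical isolated-vertex and low-minimum-degree thresholds for $G(n,m)$, by passing from the coloured random graph to its monochromatic sub-structures via Remark \ref{gnm}.

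For the lower bound, fix any $S^{*}\in\cD(\Pi)$ achieving $|\{\Pi_i:i\in S^{*}\}|=d(\Pi)$, set $C^{*}=\{\Pi_i:i\in S^{*}\}$ and $m_{-}=\frac{r(1-\epsilon)n\log n}{2\,d(\Pi)}$. By Remark \ref{gnm}, $G_{m_{-}}^{r}(C^{*})$ is a uniformly random graph whose edge count is w.h.p.\ $(1+o(1))(1-\epsilon)n\log n/2$, in particular at most $(1-\epsilon/2)n\log n/2$ for large $n$. A monotone coupling together with the classical result quoted just before the lemma produces an isolated vertex $v$ in $G_{m_{-}}^{r}(C^{*})$ w.h.p.; such a $v$ meets no colour in $C^{*}$, and the first observation in the paragraph preceding the lemma then shows that $v$ does not fit $\Pi$. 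Hence $\tau_{fit-\Pi}>m_{-}$ w.h.p., giving the lower bound.

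For the upper bound, set $m_{+}=\frac{r(1+\epsilon)n\log n}{2\,d(\Pi)}$. The converse half of the pre-lemma discussion shows that if $v$ fails to fit in $G_{m_{+}}^{r}$, then the set $S=\{i\in[\ell]:\text{deg}_{m_{+}}(v,\Pi_i)\leq 1\}$ lies in $\cD(\Pi)$; consequently some $C\subseteq[r]$ of size exactly $d(\Pi)$ (any size-$d(\Pi)$ subset of $\{\Pi_i:i\in S\}$) satisfies $\text{deg}_{m_{+}}(v,c)\leq 1$ for every $c\in C$. Contrapositively, $v$ fits provided that, for every size-$d(\Pi)$ subset $C\subseteq[r]$, at least one $c\in C$ has $\text{deg}_{m_{+}}(v,c)\geq 2$. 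Fix any such $C$. Remark \ref{gnm} identifies $G_{m_{+}}^{r}(C)$ with a uniformly random graph on w.h.p.\ at least $(1+\epsilon/2)n\log n/2$ edges; combined with a monotone coupling, the second half of the classical result yields minimum degree at least $r+2$ in $G_{m_{+}}^{r}(C)$ w.h.p. Therefore $\sum_{c\in C}\text{deg}_{m_{+}}(v,c)\geq r+2>d(\Pi)=|C|$ for every $v$, and pigeonhole gives some $c\in C$ with $\text{deg}_{m_{+}}(v,c)\geq 2$. Union bounding over the $O(1)$ choices of $C$ yields $\tau_{fit-\Pi}\leq m_{+}$ w.h.p.

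The main subtlety is the asymmetry between the two conditions driving the bounds: an isolated vertex in one colour class suffices for non-fitting, but non-fitting only forces the weaker conclusion that every colour in some size-$d(\Pi)$ set appears at $v$ at most once. A direct estimate of this weaker condition would introduce an extra $\log^{d(\Pi)} n$ factor to be absorbed by the $\epsilon$-slack via a multinomial calculation. The argument above sidesteps that by using the fact that the classical result already provides the stronger minimum-degree bound $r+2$, which strictly exceeds $d(\Pi)=|C|$ and therefore kills the weaker condition directly by pigeonhole.
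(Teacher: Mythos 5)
Your proposal is correct and takes essentially the same route as the paper: both bounds pass through Remark \ref{gnm} to identify a colour-restricted subgraph with $G(n,m)$ and then invoke the classical isolated-vertex / minimum-degree thresholds, with the observation that non-fitting forces some certifying set of colours at $v$ to each appear at most once. The only cosmetic difference is that you run the upper-bound union bound over size-$d(\Pi)$ subsets $C\subseteq[r]$ and finish by pigeonhole on the degree sum, whereas the paper unions over $S\in\cD(\Pi)$ and observes directly that $v$ would have degree at most $r$ in $G_t^r(\{\Pi_i:i\in S\})$; these are the same argument.
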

\begin{proof}
Let $t = \frac{r}{d(\Pi)}\cdot \frac{(1-\epsilon)n\log n}{2}$. Let $S\in \cD(\Pi)$ be such that $|\set{\Pi_i:i\in S}|=d(\Pi)$
and let $A=\set{\Pi_i:i\in S}$. $G_t^r(A)$, is distributed as $G(n,m)$ where w.h.p.\@  $m\leq  \frac{(1-\epsilon/2)n\log n}{2} $   (see Remark \ref{gnm}). Hence w.h.p.\@ $G_t^r(A)$ has  an isolated vertex. This vertex is not incident to any edge with a color in  $A$ and it does not fit $\Pi$. Consequently,  w.h.p. \@ $ \frac{r}{d(\Pi)}\cdot\frac{(1-\epsilon) n\log n}{2}  \leq \tau_{fit-\Pi}.$
\vspace{3mm}
\\Now let  $t = \frac{r}{d(\Pi)}\cdot \frac{(1+\epsilon)n\log n}{2}$. In the event that  $ \tau_{fit-\Pi}\geq t$ we have that there is a vertex $v\in V$ and a set $S \in \cD(\Pi)$ such that $S$ certifies that $v$ does not fit $\Pi$ i.e. for every $i\in \set{\Pi_i:i\in S}$ $v$ is incident to at most one edge with color $i$. Hence if we let 
$A=\set{\Pi_i:i\in S}$ then $v$ has degree at most $r$ in $G_t(A)$.
Fix $S \in \cD(\Pi)$ and let $A=\set{\Pi_i:i\in S}$.  Then $|A|\geq d(\Pi)$. Furthermore $G_t^r(A)$, is distributed as $G(n,m')$ where w.h.p.\@ 
 $m\geq  \frac{(1+\epsilon/2)n\log n}{2}$  (see Remark \ref{gnm}). Hence w.h.p.\@ $G_t^r(A)$  has no  vertex of degree at most r. Then by taking a union bound over  $S \in \cD(\Pi)$ we get that w.h.p.\@ $\tau_{fit-\Pi} \leq   \frac{r}{d(\Pi)} \cdot \frac{(1+\epsilon)n\log n}{2}$.
\end{proof}
\begin{cor}\label{crude}
Let $r=O(1)$, $\Pi$ be an $[r]$-pattern. Then w.h.p.\@  $ 0.4 n\log n   \leq \tau_{fit-\Pi} \leq   r n\log n.$ 
\end{cor}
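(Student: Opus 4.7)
The plan is to obtain this corollary as an immediate consequence of Lemma \ref{boundssub}. The lemma sandwiches $\tau_{fit-\Pi}$ between $\frac{r}{d(\Pi)}\cdot\frac{(1\pm\epsilon)n\log n}{2}$ for any fixed $\epsilon>0$, so it suffices to show the pattern-independent bounds $1\leq d(\Pi)\leq r$ and then choose $\epsilon$ small enough (say $\epsilon=0.1$) to absorb the factors of $2$.

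The upper bound $d(\Pi)\leq r$ follows by observing that $S=[\ell]$ is a trivial member of $\cD(\Pi)$ (it meets every pair $\{i,i+1\}$), and for this witness $|\{\Pi_i:i\in S\}|=|\{\Pi_1,\ldots,\Pi_\ell\}|=r$, where the last equality is the standing assumption that every color of $[r]$ appears in $\Pi$. The lower bound $d(\Pi)\geq 1$ is trivial since any $S\in\cD(\Pi)$ is nonempty (e.g.\ because $\{1,2\}\cap S\neq\emptyset$ forces $S\neq\emptyset$), and hence $|\{\Pi_i:i\in S\}|\geq 1$.

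Combining these with Lemma \ref{boundssub} applied at $\epsilon=0.1$, w.h.p.\
\[
\tau_{fit-\Pi}\;\geq\; \frac{r}{d(\Pi)}\cdot\frac{0.9\,n\log n}{2}\;\geq\;0.45\,n\log n\;>\;0.4\,n\log n,
\]
and
\[
\tau_{fit-\Pi}\;\leq\; \frac{r}{d(\Pi)}\cdot\frac{1.1\,n\log n}{2}\;\leq\;0.55\,r\,n\log n\;<\;r\,n\log n,
\]
which is the claim. There is no real obstacle here; the only content is the two elementary bounds on $d(\Pi)$, and the rest is arithmetic on the constants coming from Lemma \ref{boundssub}.
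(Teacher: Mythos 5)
Your proof is correct and matches the paper's argument exactly: the paper's one-line proof is precisely ``Follows from Lemma \ref{boundssub} and the fact that $1\leq d(\Pi)\leq r$.'' You have merely spelled out the two elementary bounds on $d(\Pi)$ and the arithmetic with a concrete choice of $\epsilon$, which is a faithful elaboration of the same route.
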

\begin{proof}
Follows from Lemma \ref{boundssub} and the fact that $1\leq d(\Pi)\leq r$.
\end{proof}
The above corollary can be tightened. In actual fact, Theorem \ref{Thm1} implies the following: let $d=d(\Pi)$ and $D=|\cD(\Pi)|$.
\begin{cor}\label{corx}
Let 
$$m=\frac{rn}{2d}(\log n+c_n).$$ 
Then,
$$\lim_{\substack{n\to\infty\\\ell|n}}\Pr( G_i^r \text{ contains a $\Pi$-colored Hamilton cycle})=\begin{cases}0&c_n\to -\infty.\\e^{-\lambda}&c_n\to c.\\1&c_n\to +\infty.\end{cases}
$$
Here $\lambda$ depends on the pattern $\Pi$ and it is equal to the expected number of vertices that do not fit $\Pi$.
\end{cor}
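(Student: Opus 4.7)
\textbf{Proof plan for Corollary \ref{corx}.} The idea is to reduce the Hamilton-cycle question to a Poisson limit law for the number $X_m$ of vertices of $G_m^r$ that fail to fit $\Pi$, and then to carry out that limit law directly. Theorem \ref{Thm1} already gives $\tau_\Pi=\tau_{fit-\Pi}$ w.h.p., so
$$\Pr(G_m^r\text{ contains a $\Pi$-colored Hamilton cycle})=\Pr(X_m=0)+o(1).$$
It thus suffices to show $X_m\Rightarrow\mathrm{Poisson}(\lambda)$ when $c_n\to c$, $E[X_m]\to 0$ when $c_n\to+\infty$, and $X_m\geq 1$ w.h.p.\@ when $c_n\to-\infty$.

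\textbf{First moment.} For any $A\subseteq[r]$, $G_m^r(A)$ is distributed as $G(n,m_A)$ with $m_A=(1+o(1))|A|m/r$ by Remark \ref{gnm} and Lemma \ref{binomial}. When $|A|=d=d(\Pi)$, $m_A=(1+o(1))\tfrac{n(\log n+c_n)}{2}$, which is exactly the connectivity threshold for $G(n,m_A)$, so the expected number of isolated vertices in $G_m^r(A)$ equals $(1+o(1))e^{-c_n}$ by the classical computation. By the characterisation following the definition of $d(\Pi)$, a vertex $v$ fails to fit iff the set $S_v:=\{i\in[\ell]:v$ has at most one edge of colour $\Pi_i\}$ lies in $\cD(\Pi)$, equivalently iff there exists $A\subseteq[r]$ with $\{i:\Pi_i\in A\}\in\cD(\Pi)$ such that $v$ has at most one edge of each colour in $A$. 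Minimal such $A$'s have $|A|=d$; contributions from $|A|>d$ are smaller by a factor $n^{-(|A|-d)/d}$, and the "at most one" correction (which matters only when $\Pi_i=\Pi_{i+1}$ for some $i\in S$) perturbs the leading term by $o(1)$. Inclusion-exclusion over the minimal $A$'s then yields $E[X_m]\to\lambda$ for some finite $\lambda=\lambda(c,\Pi)>0$.

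\textbf{Poisson convergence.} Use the method of factorial moments. For fixed $k\geq1$ and distinct vertices $v_1,\ldots,v_k$, the non-fitting events are determined by the coloured degree sequences at the $v_j$, which share only the at most $\binom{k}{2}$ edges running among the $v_j$'s themselves; this dependence contributes a multiplicative factor $1+O(k^2/n)$ to the joint probability. Hence
$$\Pr\bigl[\forall j:\,v_j\text{ fails to fit}\bigr]=(1+o(1))\prod_{j=1}^k\Pr\bigl[v_j\text{ fails to fit}\bigr],$$
so $E[(X_m)_k]=(1+o(1))\lambda^k$ for each fixed $k$, which is the standard criterion for $X_m\Rightarrow\mathrm{Poisson}(\lambda)$. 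The regime $c_n\to+\infty$ follows from $E[X_m]\to 0$ and Markov's inequality; the regime $c_n\to-\infty$ follows from $E[X_m]\to\infty$ together with $\mathrm{Var}(X_m)=o(E[X_m]^2)$ (which is implicit in the factorial-moment estimate), via Chebyshev.

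\textbf{Main obstacle.} The bulk of the work lies in the first-moment calculation: carrying out the inclusion-exclusion over minimal elements of $\cD_\Pi:=\{A\subseteq[r]:\{i:\Pi_i\in A\}\in\cD(\Pi)\}$, and carefully tracking the "at most one edge" subtlety when a colour $c$ satisfies $\Pi_i=\Pi_{i+1}=c$ for some $i\in S$. Once $E[X_m]\to\lambda$ is established, the Poisson limit reduces to the standard argument for isolated vertices in $G(n,m)$ at the connectivity threshold, with the extra colour structure contributing only to the combinatorial description of $\lambda$.
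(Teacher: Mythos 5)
Your overall strategy --- reduce via Theorem \ref{Thm1} to the count $X_m$ of non-fitting vertices and then apply the method of moments --- is exactly the route the paper alludes to; the paper simply omits the details, calling it ``a standard method of moments proof.''

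There is, however, a genuine gap in your first-moment step. You assert that $v$ fails to fit if and only if $S_v=\{i\in[\ell]:\deg(v,\Pi_i)\le 1\}\in\cD(\Pi)$. That is only a \emph{necessary} condition. The correct criterion is: $v$ fails iff for every cyclic position $j$, either ($\Pi_j\neq\Pi_{j+1}$ and $\deg(v,\Pi_j)=0$ or $\deg(v,\Pi_{j+1})=0$), or ($\Pi_j=\Pi_{j+1}$ and $\deg(v,\Pi_j)\le1$). Having $\deg(v,c)=1$ for two \emph{different} colours $c=\Pi_j,\Pi_{j+1}$ makes $v$ fit at position $j$, even though $j,j+1\in S_v$. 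Your claim that ``the at-most-one correction perturbs the leading term by $o(1)$'' can also fail: take $\Pi=(1,1,2,2)$, so $\ell=4$, $r=2$, $d=2$, $m=\tfrac n2(\log n+c_n)$ and $m_c\sim\tfrac n4(\log n+c_n)$ for each colour. Then $v$ fails iff $(\deg(v,1),\deg(v,2))\in\{(0,0),(0,1),(1,0)\}$, and the $(0,1)$ and $(1,0)$ events each have probability $\asymp\tfrac{\log n}{2}\,n^{-1}e^{-c_n}$, dominating the $(0,0)$ event by a factor $\log n$. So $\E{X_m}\asymp(\log n)e^{-c_n}$, which diverges for bounded $c_n$; your claim that inclusion--exclusion over minimal $A$ yields a \emph{finite} $\lambda(c,\Pi)>0$ is therefore false in general. (The paper's $\lambda$ is defined as the actual expected number of non-fitting vertices, which is allowed to depend on $n$; it does not assert finiteness.) The factorial-moment and second-moment parts of your argument are fine, but the characterisation driving the first-moment computation needs to be corrected to the position-by-position condition above before one can say which configurations dominate $\E{X_m}$.
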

The justification for Corollary \ref{corx} comes from Theorem \ref{Thm1} and the fact that in the case $c_n\to c$, the number of vertices that do not fit $\Pi$ is asymptotically Poisson with mean $\lambda$. The proof of the corollary follows a standard ``method of moments'' proof and is omitted. 
\section{Outline proof of Theorem \ref{Thm1}} 
We have defined a partition of $V$ into sets $V_1,...,V_\ell$ of equal size (see Section \ref{1.2}). We begin by identifying sets of {\em bad} vertices that have low degree and then show that (i) there are few of them and (ii) they are spread out in $G_{\tau_{fit-\Pi}}$. This is the content of Lemma \ref{str}.  Next, we cover the vertices in $BAD$ by a set of $\Pi$-colored paths $\cP_{bad}$ with a good endpoint in each of $V_1,V_\ell$ such that each bad vertex lies in the interior of such a path. For this we use Algorithm $CoverBAD$ and prove that it is successful in Lemma \ref{cover}.

If $\cA$ is the set of vertices not covered by $\cP_{bad}$ then the sets $V_i\cap\cA$ may be unbalanced. We move a small set of vertices around so that $\cA$ is now partitioned into equal sized sets $V_i''$. Then for each $i\in [\ell-1]$ we find a perfect matching of color $\Pi_i$ from $V_i''$ to $V_{i+1}''$. These matchings together form a collection of $\Pi$-colored paths $\cP_{good}$ that cover the vertices in $\cA$, each with an endpoint in $V_1,V_\ell$. The edges used in the construction of these matchings are all in $\bigcup_{i=1}^{\ell-1}X_i\cup \bigcup_{i=1}^{\ell-1}Y_i$ (the sets $X_i,Y_i$ are defined in Section \ref{1.2}). Let $\cP=\cP_{bad}\cup \cP_{good}=\set{P_1,P_2,\ldots,P_{n_\ell}}$. Let the endpoints of $P_i$ be $v_i^-\in V_1$ and $v_i^+\in V_\ell$ for $1\leq i\leq n_\ell$.

After this, we find a perfect matching $M=\set{(v_i^+,v_{\pi(i)}^-):i\in [n_\ell]}$ of color $\Pi_\ell$ from $V_\ell$ to $V_1$, using a subset of the edges $X_\ell\cup Y_\ell$. Here $\pi$ is a permutation of $[n_\ell]$ and so the digraph $\Gamma=([n_\ell],\set{(i,\pi(i))})$ is a collection of vertex disjoint cycles. We argue by symmetry that $\pi$ is a random permutation so that w.h.p. it has at most $2\log n$ cycles. A cycle $i,\pi(i),\pi^2(i),\ldots,i$ can be expanded into a $\Pi$-colored cycle $v_i^-,P_i,v_i^+,v_{\pi(i)}^-,P_{\pi(i)},v_{\pi(i)}^+,v_{\pi^2(i)}^-,\ldots,v_i^-$. And in this way we cover the vertex set $[n]$ by $O(\log n)$ $\Pi$-colored cycles.

After this we focus on converting this set of cycles into a single $\Pi$-colored Hamilton cycle using $\Pi_\ell$ edges of $E(G_{\tau_{fit-\Pi}})\setminus \bigcup_{i=1}^\ell (X_i\cup Y_i)$. This turns out to be essentially equal to the task successfully faced in the construction of a directed Hamilton cycle in \cite{HamFrieze}, and which is laid out more explicitly in \cite{2016Anastos}. The reduction to \cite{HamFrieze}, \cite{2016Anastos} is laid out in Section \ref{reduction}.
\section{Structural results}
For every $i\in [N]$ an ordering of the endpoints of $e_i$ is chosen independently and uniformly at random. Hence we may consider that $e_i$ is given to us in the form of an ordered pair $\vec{e}_i=(v_i,w_i)$. Note that in the proof of Theorem \ref{Thm1}, we know that we will not be presented with both of $(v,w)$ and $(w,v)$.
\begin{dfn}
For $j\in [\ell]$ define the sets
\begin{align*}
BAD_j&=\set{v:|\{i:\;v_i=v,w_i\in V_j, t_{j-1}< i\leq t_j, c_i=\Pi_j \}|\leq \b\log n}.\\
BAD_{j+\ell}&=\set{v:|\{i: w_i=v, v_i\in V_j, t_{j+\ell-1}< i\leq t_{j+\ell}, c_i=\Pi_j \}|\leq \b\log n}.
\end{align*}
Set $BAD=\underset{j\in [2\ell]}{\bigcup}BAD_j$ and call every vertex in $BAD$ bad. Also set $GOOD=V\setminus BAD$ and call every vertex in $GOOD$ good.
\end{dfn}
\begin{dfn}
We also define the vertex set 
\begin{align*}
TBAD&=\set{v \in V: \exists C\subset [r]  \text{ such that } 
|C|=d(\Pi) \text{ and  } {\text deg}_{\tau_{fit-\Pi}}(v,c)\leq \log\log n\text{ for all }c\in C}\\
&\subseteq BAD.
\end{align*}
We say that a vertex in $TBAD$ is tbad (terribly bad). 
\end{dfn}
\begin{lem}\label{str}
The following hold w.h.p.\@
\begin{description}
\item[(a)] $|BAD|\leq 2\ell n^{1-10\b}$.
\item[(b)] Every vertex has at most $ 6 \epsilon^{-1}r \ell^2 $ bad vertices within distance $2\ell$ of it in $G_{\tau_{fit-\Pi}}$.
\item[(c)]  $\nexists v,w\in V$ s.t.\@ $v\in TBAD$, $w\in BAD$ and their distance in $G_{\tau_{fit-\Pi}}$ is less than $2\ell$.
\item[(d)] The maximum degree in $G_{\tau_{fit-\Pi}}$ is less than $10r\log n$.
\end{description}
\end{lem}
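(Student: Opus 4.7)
My plan is to prove (d) first (it feeds (b) and (c)) and then obtain (a), (b), (c) by first-moment arguments using Chernoff bounds on the colored-edge counts underlying the definitions of $BAD$ and $TBAD$. Part (d) is immediate from Corollary~\ref{crude}, which gives $\tau_{fit-\Pi}\le rn\log n$ w.h.p., so $G_{\tau_{fit-\Pi}}\subseteq G_{rn\log n}$; in the latter each vertex-degree is hypergeometric with mean $(1+o(1))\cdot 2r\log n$, and a standard Chernoff bound plus a union bound over vertices yields the $10r\log n$ cap. As a consequence the $2\ell$-ball around every vertex has at most $(10r\log n)^{2\ell}$ vertices, a bound reused in (b) and (c).

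For (a), fix $v\in V$ and $j\in[2\ell]$ and note that at each of the $\mu=\epsilon n\log n$ steps in $(t_{j-1},t_j]$ the random edge is incident to $v$ with the prescribed orientation, other endpoint in $V_j$, and color $\Pi_j$ with probability $(1+o(1))/(n\ell r)$. Hence the count in the definition of $BAD_j$ is approximately $\mathrm{Bin}(\mu,(1+o(1))/(n\ell r))$ with mean $(1+o(1))\epsilon\log n/(\ell r)$. Since $\beta\log n$ is only a $10^{-3}$-fraction of this mean, the multiplicative Chernoff bound gives $\Pr(v\in BAD_j)\le n^{-\epsilon/(3\ell r)}\ll n^{-10\beta}$. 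Summing over $v$ and $j$ and applying Markov's inequality concludes (a).

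For (b), set $K+1:=6\epsilon^{-1}r\ell^2+1$ and bound the expected number of witnesses $(v,T)$, where $v\in V$ and $T\subseteq G_{\tau_{fit-\Pi}}$ is a rooted subtree of depth at most $2\ell$ containing $v$ at its root together with at least $K+1$ vertices of $BAD$. Any such $T$ has $t:=|V(T)|\le 2\ell(K+1)+1=O(1)$, so enumerating labelings, edge-probabilities, and badness-probabilities gives
\[
\mathbb{E}[W]\le n^{t}\cdot O(1)\cdot(Cr\log n/n)^{t-1}\cdot n^{-(K+1)\epsilon/(4\ell r)}=(\log n)^{O(1)}\cdot n^{1-(K+1)\epsilon/(4\ell r)}.
\]
The constant $K$ is chosen so that $(K+1)\epsilon/(4\ell r)\ge 3\ell/2\ge 3/2$, making the exponent of $n$ at most $-1/2$, so Markov finishes (b). The factor $n^{-\epsilon/(4\ell r)}$ per bad vertex comes from the Chernoff bound of (a) after exposing the $O(1)$ tree edges first: each such edge changes the $BAD$-defining count for a given vertex by at most $1$ out of $\Theta(n\log n)$ slots, so the tail bound is preserved up to $(1+o(1))$ factors.

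For (c), the expected $c$-colored degree of $v$ at time $\tau_{fit-\Pi}$ is $(1+o(1))\log n/d(\Pi)$ by Remark~\ref{gnm}, so a Poisson-type tail bound gives $\Pr(\deg_{\tau_{fit-\Pi}}(v,c)\le\log\log n)\le n^{-(1-o(1))/d(\Pi)}$. Requiring this for $d(\Pi)$ distinct colors and using that distinct color classes are essentially independent yields $\Pr(v\in TBAD)\le \binom{r}{d(\Pi)}\,n^{-1+o(1)}$. Enumerating witnesses $(v,w,P)$ with $v\ne w$, $v\in TBAD$, $w\in BAD$, and $P$ a $v$-$w$ path of length $k<2\ell$ in $G_{\tau_{fit-\Pi}}$, the same conditioning trick gives
\[
\mathbb{E}[W']\le\sum_{k<2\ell} n^{k+1}\cdot(Cr\log n/n)^{k}\cdot n^{-1+o(1)}\cdot n^{-\epsilon/(4\ell r)}=(\log n)^{O(\ell)}\cdot n^{-\epsilon/(4\ell r)+o(1)}=o(1),
\]
from which (c) follows by Markov. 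The main obstacle throughout (b) and (c) is disentangling badness/tbadness events from short-path events, since both depend on the same random graph; the conditioning trick works because each candidate subgraph has $O(1)$ edges while each $BAD$-defining window contains $\Theta(n\log n)$ edges, so the exposure perturbs the Chernoff tail only by $(1+o(1))$ factors---slack built into the choice of $\beta$.
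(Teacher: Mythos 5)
Your overall strategy --- first-moment union bounds driven by hypergeometric/Chernoff tail estimates on colored-edge counts --- is essentially the paper's. Parts (a) and (d) are fine as sketched (for (a) the paper instead takes a union bound over sets $A$ of size $n_b$ witnessing $|BAD_1|>n_b$, rather than bounding $\mathbb{E}[|BAD|]$ and applying Markov, but both routes land in the same place), and your ``conditioning trick'' for decoupling the $O(1)$ tree/path edges from the $BAD$-defining counts is the right observation and is precisely what the paper's Lemma~\ref{est} is set up to handle.

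The genuine gap is in the factorization you use in (b), and it carries into (c): you bound the probability that $K+1$ prescribed vertices are simultaneously bad (given the tree) by $\prod_i\Pr(v_i\in BAD)$, but these events are not independent and you offer no argument for why the product is an upper bound. This can be rescued --- for distinct vertices the $BAD_j$-defining counts involve disjoint (oriented, colored) edge-slots, the underlying without-replacement sampling is negatively associated, and decreasing events on disjoint coordinate sets then satisfy the product upper bound --- but that needs to be stated, and one must also check that the negative-association bound survives the conditioning on $E(T)\subset E(G_{rn\log n})$. The paper avoids the issue entirely: rather than multiplying per-vertex probabilities, it bounds the single aggregate event ``at most $\beta s\log n$ colored edges leave the whole set $A$ of $s$ bad vertices into $V_1\setminus(A\cup B)$,'' which is one application of Lemma~\ref{est} conditional on $E(T)$, with no independence claim required. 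Likewise in (c) the paper conditions sequentially on (i), then (ii), then (iii), applying Lemma~\ref{est} at each stage, rather than taking a product of marginal probabilities $\Pr(v\in TBAD)\cdot\Pr(w\in BAD)$. You should either import and verify a negative-association lemma in the conditioned model, or switch to the paper's aggregate/sequential-conditioning formulation.
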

\begin{proof}(a) Let $n_b= n^{1-10\b}$. Due to symmetry, for $j\in [2\ell]$ the sizes of $BAD_j$ follow the same distribution. Therefore it suffices to show that with probability  1- $o(1)$ we have $|BAD_1|\leq n_b.$ In the case that $|BAD_1|>n_b$ there is a set $A\subset V$ of size $n_b$ such that 
\begin{align}\label{con1} 
|\set{\vec{e}_i=(v_i,w_i):v_i\in A, w_i\in V_1\setminus A, c_i=\Pi_1 \text{ and } i\leq t_1}|\leq \b n_b\log n.
\end{align}
$G_{t_1}^r(\Pi_1)$ is distributed as a $G(n,t_1')$ where w.h.p.\@ $t_1'=(1+o(1))t_1$ (see Remark \ref{gnm}). 
We can choose $A$ in $\binom{n}{n_b}$ ways. Then there are at least $n_b(n_\ell-n_b)$ and at most $n_bn_\ell$ edges with one endpoint in each of $A,V_1\setminus A$. From these, if \eqref{con1} occurs, then $k\leq \beta n_b\log n$ many appear in  $G_{t_1}^r(\Pi_1)$.
Therefore, 
\begin{align*}
\pr(\eqref{con1}\mid t_1')&\leq \binom{n}{n_b} \sum_{k=0}^{\b n_b\log n} \frac{\binom{n_bn_{\ell}}{k}\binom{N-n_b(n_{\ell}-n_b)}{t_1'-k} }{\binom{N}{t_1'}}\\
&\leq \bigg(\frac{en}{n_b} \bigg)^{n_b}   \sum_{k=0}^{\b n_b\log n} \bigg( \frac{3t_1' n_b n_\ell}{k N}\bigg)^{k} \exp\set{-\frac{(1+o(1)) n_b n_\ell t_1'}{N}}\\
&\leq \bigg(\frac{en}{n_b} \bigg)^{n_b}   \sum_{k=0}^{\b n_b\log n} \bigg( \frac{6e^{o(1)} \epsilon n_b \log n }{k\ell r}\bigg)^{k} 
\exp\set{-\frac{2e^{o(1)}\epsilon n_b\log n }{r\ell}}\\
&\leq \b n_b\brac{e^{1+10\b}\cdot (6e^{o(1)}\cdot 10^3\cdot e^{-2e^{o(1)}\cdot10^3} )^{\b\log n}}^{n_b}\\
&=o(1).
\end{align*}
For the second inequality we use Lemma \ref{est} (with $a=n_bn_\ell,b=n_b(n_\ell-n_b),c=d=0,i=k,t=t_1',Q=N$) and for the third one we use that $t_1'=(1+o(1)) \epsilon r^{-1} \log n$. For the last one we use that $\big( \frac{ 6 e^{o(1)} \epsilon n_b \log n }{k\ell r}\big)^{k}$ has a unique maximum obtained when $\frac{6e^{o(1)} \epsilon n_b \log n }{k\ell r}=e$. Thus for $k\in [0,\b n_b\log n]$, this is maximized when $k=\b n_b\log n$.
\vspace{3mm}
\\(b) We will show that with probability  at least 1- $o\big(1)$ every vertex has at most 
$s=3 \epsilon^{-1} r \ell $ vertices within distance $2\ell$ of it that belong to $BAD_1$ in $G_{rn\log n}$, hence in $G_{\tau_{fit-\Pi}}$. The result follows by symmetry and the union bound. In the case that there exists a vertex $v$ with at least $s$ vertices in $BAD_1$ within distance $2\ell$ of $v$ we can find sets $A,B$ such that the following is satisfied: (i) $|A|=s$, (ii) $|B|\leq 2s(\ell-1)+1$ (iii) $A \cup B$ spans a tree in $G_{rn\log n}$, (iv) there are at most $\b s\log n$ edges from $A$ to $V_1\backslash (A \cup B)$ in $G^r_{t_1}(\Pi_1)$ (here $A$ consists of vertices in $BAD_1$ and $B$ consists of a vertex $v$ and at most $(2\ell-1)s$ vertices that are spanned by paths of length at most $2\ell$ form $v$ to vertices in $A$).
\vspace{3mm}
\\Fix sets $A,B$ satisfying (i), (ii) and a tree $T$ that is spanned by $A\cup B$. Let $|B|=b$.
\begin{align}
\pr(E(T)\subset E(G^r_{rn\log n})) &=\frac{\binom{N}{rn\log n-(s+b-1)}}{ \binom{N}{rn\log n}}
= \frac{ \binom{rn\log n}{s+b-1}}{ \binom{N-rn\log n+(s+b-1)}{s+b-1}}\nonumber\\
&\leq \bigg( \frac{rn\log n}{N-rn\log n+(s+b-1)}\bigg)^{s+b-1}\nonumber\\
&\leq \bigg(\frac{3r\log n}{n} \bigg)^{s+b-1}.\label{Tree}
\end{align}
$G^r_{t_1}(\Pi_1)$ is distributed as a $G(n,t_1')$ with $t_1'=(1+o(1))t_1/r$ (see Remark \ref{gnm}).
Conditioned on $t_1'$ and $E(T)\subset E(G^r_{rn\log n})$  and on $z\leq s+b-1$ edges  of $T$  appearing in $G^r_{t_1}(\Pi_1)$
we have that the probability of condition (iv) being satisfied is bounded above by  
\begin{align*}
\sum_{k=0}^{\b s\log n}\frac{ \binom{ s n_{\ell} }{k} \binom{N-s(n_{\ell}-s-b)-(s+b-1)}{t_1'-k-z}}{ \binom{N-(s+b-1)}{t_1'-z}} 
&\leq \sum_{k=0}^{\b s\log n}\bigg(\frac{6e^{o(1)}snt_1'}{\ell k n^2} \bigg)^{k}  \exp\set{-\frac{(1+o(1))sn t_1'}{\ell N}} \\
&=\sum_{k=0}^{\b s\log n}\bigg(\frac{6e^{o(1)}s \epsilon \log n}{\ell k r} \bigg)^{k}  \exp\set{-\frac{2e^{o(1)}s \epsilon \log n }{\ell r}}
\\
&\leq \b s\log n(6e^{o(1)}\cdot 10^3\cdot e^{-2e^{o(1)}\cdot10^3})^{\b s\log n}\\
&\leq \frac{1}{n^2}.
\end{align*}
To get the first expression observe that $G^r_{t_1}(\Pi_1)$ consists of $t_1'$ edges. z of those have already being chosen from $E(T)$. Thereafter $k$ of those are chosen so that they have an endpoint in each $A,V_1\setminus (A\cup B)$ and the rest are chosen from those not in $E(T)$ or those having both an endpoint in each $A, V_1\setminus(A\cup B)$. We then apply Lemma \ref{est} with $a=sn_\ell,b=s(n_\ell-s-b),c=s+b-1,d=z,i=k,t=t_1',Q=N$. For the last equality we have used that $(\frac{6e^{o(1)}s \epsilon \log n}{k})^k$ is maximized when $\frac{6e^{o(1)}s \epsilon \log n}{k}=e$. Thus for $k\in[0,n_{c_2}]$ this is maximized when $k=n_{c_2}$.
\vspace{3mm}
\\Summarizing, there are $\binom{n}{s}$ ways to choose $A$ and thereafter $\binom{n-s}{b}$ ways to choose a set $B$ of size $b\leq (2\ell-1)s+1$. Given $A,B$ there are $(s+b)^{s+b-2}$ trees that are spanned by $A\cup B$. Each such tree appears with probability at most $\big(\frac{3r\log n}{n} \big)^{s+b-1}$. Finally given the appearance of any such tree there are at most $\beta s \log n$ edges of color $\Pi_1$  with an endpoint in each $A$, $V_1\backslash (A\cup B)$ in $G_{t_1}$ with probability at most $1/n^2$
Therefore the probability that conditions (i), (ii), (iii) and (iv) are satisfied is bounded by 
\begin{align*}
\sum_{b=0}^{(2\ell-1)  s+1}\binom{n}{s}\binom{n-s}{b}(s+b)^{s+b-2}\bigg(\frac{3r\log n}{n} \bigg)^{s+b-1} \frac{1}{n^2} =o(1).
\end{align*}
(c) It is enough to show the above statement for $v\in TBAD$, $w\in BAD_1$. 
In the case that the statement is false $\exists v,w\in V$, $C\subset [r]$ with $|C|=d$ and $S\subset V$ with $|S|=s \leq 2\ell-1$ such that the following hold:
(i) $\set{v,w}\cup S$ spans a path $P$ in $G_{r\log n}$, 
(ii) there are at most $d\log \log n$ edges adjacent to $v$ in $G_{\tau_{fit-\Pi}}^r(C)$,
(iii) there are at most $\b\log n$ edges from $w$ to $V_1\setminus S\cup \set{w}$ in $G_{t_1}^r(\Pi_1)$. 
\vspace{3mm}
\\Fix such $v,w,P,S,C$. As shown in (b), (see \eqref{Tree}), (i) is satisfied with probability at most $(\frac{3r\log n}{n})^{s+1}$. $G_{\tau_{fit-\Pi}}^r(C)$ is distributed as a $G(n,m_d)$ where w.h.p.\@ $\frac12(1-\alpha)n\log n\leq m_d\leq rn\log n$ for arbitrarily small $\alpha>0$, (see Corollary \ref{crude} and Remark \ref{gnm}). Therefore conditional on (i) and on $u\leq s+1$ edges of $P$ appearing in $G_{\tau_{fit-\Pi}}^r(C)$ out of which at most one is adjacent to $v$ we have 
\begin{align*}
\pr({\text deg}_{\tau_{fit-\Pi}}(v,C)\leq d\log\log n)&\leq \sum_{k=0}^{d\log\log n-1}\frac{\binom{n-2}{k}\binom{N-(s+1)-(n-2)}{m_d-u-k}}{\binom{N-(s+1)}{m_d-u}}\\
&\leq \sum_{k=0}^{d\log\log n} \bigg(\frac{6e^{o(1)}nm_d}{kn^2} \bigg)^k \exp\set{ -\frac{(1+o(1))nm_d}{N} }\\
&\leq  \sum_{k=0}^{d\log\log n} \bigg(\frac{6e^{o(1)}r\log n}{k} \bigg)^k  e^{-(1-2\alpha)\log n}  \\
&\leq n^{3\alpha-1}.
\end{align*}
For the first inequality we used Lemma \ref{est} with $a=b=n-2,c=s+1,d=u,i=k,t=m_d,Q=N$.

$G_{t_1}^r(\Pi_1)$ is distributed as a $G(n,t_1')$ where w.h.p.\@ $t_1=(1+o(1))t_1/r$ (see Remark \ref{gnm}. Conditional on (i), (ii) occurring, if (iii) also occurs, then in $E(G_{t_1}(\Pi_1))$ there are $k\leq \beta \log n$ edges from $w$ to $V_1\setminus \set{v,w}$ and  $h\leq s+1+d\log\log n$ edges that either belong to $E(P)$ or are adjacent to $v$ and lie in $E(G_{\tau_{fit-\Pi}}^r(C\cap \set{\Pi_1}))\cap E( G_{t_1'}^r(\Pi_1))$. The remainder of the $t_1-k-h$ edges are chosen from those not in $E(P)$ and not in $\{w\}\times V_1\setminus \set{v,w}$. If $\Pi_1\in C$ then these edges are also chosen from those not incident to $v$ while if $\Pi_1\notin C$ then these edges are also chosen from the  $d_{\tau_{fit-\Pi}}(v,C)$ edges not incident to $v$ in $E(G_{\tau_{fit-\Pi}}^r(C))$. Let $j=n-2$ if $\Pi_1\in C$ and $j=d_{\tau_{fit-\Pi}}(v,C)$ otherwise. Then,
\begin{align*}
\pr((iii)|(i),(ii))&\leq\sum_{k=0}^{\b\log n}\frac{\binom{n_{\ell}}{k} \binom{N-(s+1)-(n_{\ell}-2)-(j-1)}{t_1'-h-k}}{ \binom{N-(s+1)-j}{t_1'-h}} \\
&\leq \sum_{k=0}^{\b\log n} \bigg(\frac{6e^{o(1)}t_1'n}{k\ell n^2} \bigg)^k\exp\set{-\frac{(1+o(1))t_1'n}{\ell N}}\\
&\leq  \sum_{k=0}^{\b\log n} \bigg(6e^{o(1)}\cdot 10^3 \bigg)^{\b\log n}\exp\set{-\frac{3\epsilon \log n}{2r \ell }}\\
& \leq n^{-\e/(r\ell)}.
\end{align*}
For the first inequality we used Lemma \ref{est} with $a=n_\ell,b=n_\ell-2,c=j+s+1,d=h,i=k,t=t_1',Q=N$.
For the second one we used the fact that $ (\frac{6e^{o(1)}t_1'n}{k\ell n^2} )^k$ has a unique global maximum that occurs when $\frac{6e^{o(1)}t_1'n}{k\ell n^2}=e$. Thus for $k\in [0,\b\log n]$ this is maximized when $k=\b\log n$. Furthermore we used that w.h.p. $ \frac{3t_1}{4r}\leq t_1' \leq \frac{7t_1}{6r}$ (see Remark \ref{gnm}).

Taking the union bound over $v,w,P,S,C$ we have that the probability the event described by (c) does not hold can be bounded by
$$ n(n-1)\sum_{s=0}^{2\ell-1} \binom{n-2}{s} s! \binom{r}{d} \bigg(\frac{3r\log n}{n}\bigg)^{s+1} n^{3\alpha-1}\cdot  n^{-\e/(r\ell)}=o(1),$$
for $\alpha$ sufficiently small.

(d) In the event that (d) fails there exist a vertex with at least $10r\log n$ in $G_{rn \log n}$. Thus
\begin{align*}
\pr(\neg(d))= \sum_{k=10r\log n}^{n} n \frac{\binom{n-1}{k}\binom{N-n-1}{rn\log n-k}}{\binom{N}{rn\log n}}
\leq n \sum_{k=10r\log n}^{n^2} \bigg(\frac{6e^{o(1)}n \cdot rn \log n}{kn^2}\bigg)^{k}=o(1).
\end{align*}
For the first inequality we used Lemma \ref{est} with  $a=n-1,c=d=0,e=n+1,i=k,t=rn\log n,Q=N$.
\end{proof}
\section{Proof of Theorem \ref{Thm1}}
We say a pattern is trivial if $\ell$=1. In the case of a trivial pattern, Theorems \ref{Thm1}-\ref{Thm4} reduce to well known results about hitting times and the random graph process (see \cite{EvolutionGnm}, \cite{KarFr}). Hence we may assume that $\ell>1$.
\vspace{3mm}
\\We prove Theorem \ref{Thm1} using the 3-phase approach for finding a Hamilton cycle in the directed random graph process used in \cite{HamFrieze}. In the first phase a 1-factor is created consisting of $O(\log n)$ cycles. Then in the second phase we sequentially merge pairs of cycles by performing two arc exchanges until no such arc exchange is available. W.h.p.\@ at the end of Phase 2 the largest cycle is of order $n-o(n)$. Finally as a last phase we merge one by one the smaller cycles with the largest one. In each merging we start by merging the two cycles into a path. Then we use double rotations, a technique that generalizes Posa's rotations to the directed setting, in order to turn the initial path into a cycle. Instead of going through all 3 phases of the proof we give a reduction to the following Lemma. For its proof see Sections 9 and 10 of \cite{2016Anastos}.
\begin{lem}\label{reduction}
Let $G',F,H,G_1$ be di-graphs such that: (i) $F$ is a 1-factor consisting of $O(\log n)$ directed cycles, (ii) $H$ is edge disjoint from $F$ and its maximum total degree is $O(\log n)$, (iii) $G_1$ is a random graph where every edge not in $E(F)\cup E(H)$ appears independently with probability $p_1=\Omega(\frac{\log n}{n})$, (iv) $E(F)\cup E(H)\cup E(G_1)\subset E(G')$. Then w.h.p.\@ $G'$ has a Hamilton cycle consisting of arcs only in $E(F)\cup E(G_1)$.
\end{lem}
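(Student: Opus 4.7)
The plan is to adopt the standard three-phase approach for constructing Hamilton cycles in random directed graphs, starting from the given 1-factor $F$ and using arcs of $G_1$ to perform surgery, with the sparse graph $H$ acting as a forbidden set that we must avoid.

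Phase 1 is already done by assumption: $F$ is a 1-factor consisting of $k=O(\log n)$ directed cycles $C_1,\ldots,C_k$. In Phase 2, I would greedily merge pairs of cycles. If two distinct cycles $C$ and $C'$ in the current 1-factor contain arcs $(u,v)$ and $(u',v')$ respectively, and if both $(u,v')$ and $(u',v)$ lie in $E(G_1)$, we delete $(u,v),(u',v')$ and insert $(u,v'),(u',v)$, obtaining a 1-factor with one fewer cycle. Iterate until no such double swap exists. A first-moment computation shows that for any two cycles of sizes $s,s'$ the expected number of admissible swap pairs is $\Theta(s s' p_1^2)$, which is $\omega(1)$ as long as $ss' = \omega(n/\log n)$. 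Hence once the procedure halts, all but one cycle have total length $o(n)$; call the dominant cycle $C^*$, of length $n-o(n)$.

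In Phase 3, I would absorb each remaining short cycle $C$ into $C^*$ one at a time. For a given short cycle $C$, remove one arc each from $C$ and $C^*$ (chosen so that an arc of $G_1$ bridges the resulting openings), producing a directed Hamilton path $P$ on $V(C)\cup V(C^*)$. I would then close $P$ using the double-rotation technique of \cite{HamFrieze}: fixing the starting endpoint $v_0$, each double rotation uses an arc of $G_1$ between the current terminal endpoint and an internal vertex of $P$ to produce a new path on the same vertex set but with a different terminal endpoint. Iterating generates a set $S$ of candidate endpoints, and a standard expansion argument using the random arcs of $G_1$ shows $|S|=\Omega(n)$ w.h.p. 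Then the probability that no arc of $G_1$ from some vertex of $S$ back to $v_0$ exists is at most $(1-p_1)^{|S|} = n^{-\omega(1)}$, so we close the path into a cycle and absorb $C$.

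The main obstacle is controlling the double-rotation step in Phase 3 while simultaneously avoiding $E(H)$ and the arcs already used in the current path. This is exactly where the hypotheses on $H$ and $p_1$ interact: at each rotation step, $H$ forbids at most $O(\log n)$ out-neighbours per vertex, and the path in progress forbids at most a constant number more, so the effective out-degree from $G_1$ that remains available is $(1-o(1))np_1 = \Omega(\log n)$. This is the margin needed to make the endpoint-set $S$ grow geometrically to linear size, just as in the undirected P\'osa analysis but with two rotations per step to preserve orientation. Since the above scheme matches verbatim the construction analysed in Sections~9 and 10 of \cite{2016Anastos} — with $F$ playing the role of the initial 1-factor, $G_1$ the random reservoir, and $H$ the bounded-degree forbidden set — I would import their Phase 2 and Phase 3 estimates rather than rewrite the concentration and expansion lemmas, and conclude that w.h.p. a Hamilton cycle of $G'$ using only arcs of $E(F)\cup E(G_1)$ exists.
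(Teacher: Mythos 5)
Your three-phase sketch (pre-given 1-factor, greedy double-swap merging, then double-rotation closure) is exactly the strategy the paper outlines in Section~3, and, like the paper, you ultimately defer the quantitative expansion and concentration work to Sections~9 and~10 of \cite{2016Anastos}. Since the paper's own proof of Lemma~\ref{reduction} consists solely of that citation, your account matches it; the only slip is a harmless typo in the Phase~2 threshold, where $ss'=\omega(n/\log n)$ should read $ss'=\omega(n^2/\log^2 n)$.
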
   
In the random digraph setting Lemma \ref{reduction} states the following. Assume that we are able to perform Phase 1 and find a directed 1-factor $F$ consisting of $O(\log n)$ cycles without exposing too many arcs. Even if we forbid re-using any the arcs that we have exposed but not used in the construction of $F$, (each vertex is w.h.p. incident to $O(\log n)$ of them), then we have enough randomness left so that w.h.p.\@ we are able to perform Phases 2,3 and construct the Hamilton cycle while avoiding the edges that we have exposed in Phase 1.  \\
\subsection{Construction of the 2-factor}
We now reveal the following. For $i\in [N]$ we reveal the color $c_i$. For every $i\leq t_{\ell}$ we reveal $v_i$ from the pair $\vec{e}_i=(v_i,w_i)$.
 Furthermore for  $t_\ell< i\leq t_{2\ell}$ we reveal $w_i$ from the pair $\vec{e}_i=(v_i,w_i)$.
Given the information that we have just revealed we can determine $BAD$.
Now given the set $BAD$ we reveal every edge with an endpoint in $BAD$. Observe that $TBAD\subset BAD$ hence $TBAD$ is now determined. We now implement the  algorithm \emph{CoverBAD}, given below, in order to cover every vertex in $BAD$ by a unique path with an endpoint in each $V_1,V_\ell$. 

Given $v\in V$, \emph{CoverBAD} grows a $\Pi$-colored path $P(v)$ with $v$ in the interior. Suppose that $v$ is incident with edges $(v_1,v_0=v),(w_0=v,w_1)$ of color $\Pi_i,\Pi_{i+1}$ respectively, see Step 2. If $i\neq\ell$ then in Step 3 we create a path $(v=w_0,w_1,w_2,\ldots,w_s,s=\ell-i-1)$ where edge $(w_j,w_{j+1})$ has color $\Pi_{i+j}$. In Step 4 we extend this path via $(v=v_0,v_1,\ldots,v_t,t=i)$ where edge $(v_j,v_{j+1})$ has color $\Pi_{i-j}$. Thus in this case $P(v)$ has length $\ell$. When $i=\ell$ we grow a path of length $2\ell$ in the same manner.

We say that \emph{CoverBAD} breaks if there is a step where no vertices satisfying the given conditions can be found.

\begin{algorithm}
\caption{\textsc{CoverBAD }}
{\bf Initialize}: $\cA:=GOOD, \Pi_{bad}:=\emptyset$.\\
For $v\in BAD$ do begin:
\begin{enumerate}
\item{ Set $v_0=w_0=v$, $s=1,t=1$.}
\item{ Find $v_0,w_1 \in \cA $, $i\in [\ell]$ such that $(v_0,v_1), (w_0,w_1)\in G_{\tau_{fit-\Pi}}$, $c((v_0,v_1))=\Pi_i$ and $c((w_0,w_1))=\Pi_{i+1}.$  Update $\cA=\cA \setminus \set{v_0,w_1}$.}
\item{While $c((w_{s-1},w_s))\neq \Pi_\ell$:
 expose all ordered edges $\vec{e}_i=(w_s,x)$ and find $x\in \cA$ such that  $c((w_s,x))=\Pi_{i+s+1}$. 
 Update$\cA=\cA \setminus\set{x}, s=s+1, w_s=x$.}  
\item{While $c((v_{t-1},v_t))\neq \Pi_1$:
 expose all ordered edges $\vec{e}_i=(y,v_t)$ and find $y\in \cA$ such that  $c((y,v_t))=\Pi_{i-(t-1)}$. 
 Update$\cA=\cA \setminus\set{y}, t=t+1$, $v_t=y.$}
\item{Set $P(v)=v_t,v_{t-1},\ldots,v_1,v,w_1,\ldots,w_s$. Update $\Pi_{bad}=\Pi_{bad}\cup \set{P(v)}$.}
\item end
\end{enumerate}
\end{algorithm}
\begin{rem}\label{distance}
Every vertex $v\in BAD$ lies in the interior of some path in $\Pi_{bad}$ and it is at distance at most $\ell$ from each of its endpoints.
\end{rem}
\begin{lem}\label{cover}
W.h.p.\@ \emph{CoverBAD} does not break. 
\end{lem}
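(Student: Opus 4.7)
The plan is to condition on the high-probability events of Lemma~\ref{str} and then argue deterministically that \emph{CoverBAD} always finds a valid continuation. Two bounds drive the argument: a global bound on the total number of vertices ever removed from $\cA$, and a local bound on the number of used or bad vertices that can lie close to the vertex at which the algorithm is currently extending. Globally, each iteration adds at most $2\ell$ vertices to the used set, so by Lemma~\ref{str}(a) the total size of $V \setminus \cA$ never exceeds $4\ell^2 n^{1-10\beta} = o(n)$. Locally, Remark~\ref{distance} tells us that every vertex on a previously built path $P(u)$ lies within graph-distance $\ell$ of $u$, so any used vertex at distance $1$ from the current endpoint $y$ must come from a bad $u$ with $d_{G_{\tau_{fit-\Pi}}}(y,u) \leq \ell+1$.

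For Step~2 (choosing the seed pair of edges at a bad vertex $v$), I would split into two cases. If $v \in TBAD$, Lemma~\ref{str}(c) forbids any other bad vertex within distance $2\ell$ of $v$; since $\ell \geq 2$, this rules out any previously processed bad vertex $u$ with $d(v,u) \leq \ell+1$, so every neighbor of $v$ still lies in $\cA$, and any pair of edges witnessing that $v$ fits $\Pi$ works. If $v \in BAD \setminus TBAD$, set $L = \set{c \in [r] : \deg_{\tau_{fit-\Pi}}(v,c) \leq \log\log n}$; then $|L| < d(\Pi)$, so $\set{i : \Pi_i \in L} \notin \cD(\Pi)$, and some $j$ satisfies $\Pi_j, \Pi_{j+1} \notin L$. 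Hence $v$ has more than $\log\log n$ edges of each of these two colors; by Lemma~\ref{str}(b) at most $O(\ell^2)$ of their endpoints are bad and at most $O(\ell^3)$ are used, so plenty of valid choices of $v_1, w_1$ survive.

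For Steps~3 and~4 (extending from a good vertex $u$), the definition of $GOOD$ guarantees more than $\beta \log n$ candidate edges of the required color, orientation, and time-window, while the same local bound leaves at most $O(\ell^3)$ of their endpoints outside $\cA$; since $\beta \log n \gg O(\ell^3)$, many valid extensions remain at each step, and a path of length at most $2\ell$ can be completed. The main obstacle is the TBAD case: there, the fit witness at $v$ may consist of just two edges with essentially no flexibility in the choice of neighbors, and one must invoke the stronger exclusion radius of Lemma~\ref{str}(c) to guarantee that these two specific neighbors have not already been consumed by an earlier iteration.
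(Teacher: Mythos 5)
Your proof is correct and follows the same case analysis as the paper's: Step~2 is split into $v\in TBAD$ (protected by the exclusion radius of Lemma~\ref{str}(c)) and $v\in BAD\setminus TBAD$ (protected by the $\log\log n$ degree surplus combined with the local bound from Lemma~\ref{str}(b) and Remark~\ref{distance}), while Steps~3--4 use the $\beta\log n$ surplus guaranteed by the $GOOD$ definition against the same $O(1)$ local count of unavailable vertices. The reformulation of the $BAD\setminus TBAD$ case via $\set{i:\Pi_i\in L}\notin\cD(\Pi)$ is a slightly cleaner phrasing of the paper's argument, but the underlying reasoning and bounds are identical.
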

\begin{proof}
We consider 3 cases.
\vspace{2mm}
\\{\bf Case 1:} \emph{CoverBAD} breaks at Step 2 for some $v\in TBAD$. Due the definition of the stopping time $\tau_{fit-\Pi}$, there exist $v_0,w_1\in V$, and $i\in [\ell]$ such that $(v_0,v), (v,w_1)\in G_{\tau_{fit-\Pi}}$, $c((v_0,v))=\Pi_i, c((v,w_1))=\Pi_{i+1}$. It is enough to show that at the begining of the iteration in which we construct $P(v)$, we can find $v_1,w_1 \in \cA \subseteq GOOD$. Lemma \ref{str} (iii) implies that $v_1,w_1\notin BAD$. If $v_1 \notin \cA$ then there exists $v'\in BAD$ such that $v_0 \in P(v')$. Then Remark \ref{distance} implies that $v,v'$ are within distance $2\ell$ contradicting Lemma \ref{str} (iii).
Hence w.h.p.\@ $v_1\in \cA$. Similarly $w_1 \in \cA$ w.h.p.
\vspace{2mm}
\\{\bf Case 2:}  \emph{CoverBAD} breaks at Step 2 for some $v\in BAD \setminus TBAD$. Observe that if for every $i \in [\ell]$ we have either
$ {\text deg}_{\tau_{fit-\Pi}}(v,\Pi_i)\leq \log \log n$ or  $ {\text deg}_{\tau_{fit-\Pi}}(v,\Pi_{i+1})\leq \log \log n$  then 
$|\set{c\in C: {\text deg}_{\tau_{fit-\Pi}}(v,c)\leq \log\log n}|\geq d$. On the other hand since $v\notin TBAD$ we have that 
$|\set{c\in C: {\text deg}_{\tau_{fit-\Pi}}(v,c)\leq \log\log n}|\leq d-1$. Hence there exists $j\in [\ell]$ such that
$ {\text deg}_{\tau_{fit-\Pi}}(v,\Pi_i)\geq \log \log n$ and  $ {\text deg}_{\tau_{fit-\Pi}}(v,\Pi_{i+1})\geq \log \log n$.
 Let 
$$\cC_j(v)=\set{w\in V: (v,w)\in G_{\tau_{fit-\Pi}} \text{ and } c((v,w))= \Pi_j}.$$ 
 Similarly define $\cC_{j+1}(v)$. Then $|\cC_j(v)|,|\cC_{j+1}(v)|\geq \log\log n .$ Since the algorithm breaks we have 
 $\cC_j(v)\cap\cA=\emptyset$ or  $\cC_{j+1}(v)\cap\cA=\emptyset$. From Remark \ref{distance} we have that if a vertex is removed from $\cA$ then it is within distance $\ell$ from some vertex in $BAD.$ From Lemma \ref{str} (ii)
w.h.p.\@ there are at most $6\epsilon^{-1}r\ell^2$ vertices within distance $2\ell$ from $v$ and for each such vertex at most $2\ell$ vertices are removed from the $\cA$. Hence w.h.p.\@ $|\cC_j(v)\cap\cA| \geq |\cC_j(v)|-12\epsilon^{-1}r\ell^3 \geq 1$. Thus $\cC_j(v)\cap \cA\neq \emptyset$. Similarly $\cC_{j+1}(v)\cap \cA\neq \emptyset$.
\vspace{2mm}
\\{\bf Case 3:} \emph{CoverBAD} breaks at Step 3 or Step 4 for some $v\in BAD$. Assume that it breaks at Step 3 for $v\in V$ (the case that it breaks at Step 4 can be dealt with in the same way). Then there exists $j\in [\ell]$ and $t\leq \ell-1$ such that no vertex $v_{t+1}$ can be found such that $c((v_t,v_{t+1}))$ is of color $\Pi_j$.
 Let $\cC_j(v)=\set{v\in V: (v_t,v_{t+1})\in G_{\tau_{fit-\Pi} } \text{ and }c((v_t,v_{t+1}))=\Pi_j}.$ By construction $v_s\in GOOD$ and hence $|\cC_j(v)|=\Omega(\log n)$. The rest of the argument is identical to the one given for Case 2.
\end{proof}
After the termination of \emph{CoverBAD}, $\cA$ consists of all the vertices not spanned by some path in $\Pi_{bad}$. Our next step is to cover the vertices in $\cA$ by $\Pi$-colored paths. In order to do so we use the partition $V_1$,\ldots,$V_\ell$.
For $i \in [\ell]$ let $V_i'=V_i \cap \cA$.
From each set $V_i$ a set of size at most $2\ell|BAD|$ may have been used in the construction of paths in $\Pi_{bad}$.
Thus for $i\in [\ell]$ we have $|V_i\cap \cA| \geq |V_i|-2\ell |BAD|$. 
Let $n_r=2\ell |BAD|$. Then from Lemma \ref{str} we have that $n_r=O(n^{1-10\beta})$.
To equalize the sizes of $V_i'$s,  for $i\in [\ell]$ we now remove from $V_i'$ a  random set of size $n_r$. We redistribute the vertices that we have just removed to the sets $V_i'$s in order to form sets of the same size.
For $i\in [\ell]$ we let $V_i''$ be the set resulted from $V_i'$.
\vspace{3mm}
\\ We  now define the following edge sets. 
For $v\in V$ and $i \in [\ell]$ let $E^+_i(v)$ be the first 6 edges $(v,w)$ with $w\in V_{i}'\cap V_i''$ that appear after $e_{t_{i-1}}$. Similarly let $E^-_i(v)$ be the first 6 edges $(w,v)$ with $w\in V_{i}'\cap V_{i}''$ that appear after $e_{t_{\ell+i-1}}$. We stress that $V_i'\cap V_i''$ equals the set of the vertices in $V_i$ that are not covered by some path in $\Pi_{bad}$ and have not been moved to some other set $V_j'$.  
\begin{lem}\label{edgematch}
W.h.p.\@ for every $i\in [\ell]$ and every $v\in V_i'$ we have $E^+_i(v) \subset E(G_{t_{i-1}})\setminus E(G_{t_i})$ and $E^-_i(v) \subset E(G_{t_{\ell+i}})\setminus E(G_{t_{\ell+i-1}})$.
\end{lem}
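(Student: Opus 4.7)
The plan is to fix $i\in[\ell]$ and $v\in V_i'$, bound the failure probability for the pair $(v,i)$ by $n^{-\omega(1)}$ using the structural guarantees provided by $v\in GOOD$, and then take a union bound over the $O(n)$ such pairs. The ``$+$'' and ``$-$'' statements are handled symmetrically, so I describe only the former in detail.

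The containment $E_i^+(v)\subset E(G_{t_i})\setminus E(G_{t_{i-1}})$ is equivalent to the window $(t_{i-1},t_i]$ containing at least $6$ indices $j$ with $v_j=v$ and $w_j\in V_i'\cap V_i''$. Since $v\in V_i'\subseteq GOOD$ we have $v\notin BAD_i$, and so by the very definition of $BAD_i$ the set
\[
J:=\bigl\{j\in(t_{i-1},t_i]:v_j=v,\;w_j\in V_i,\;c_j=\Pi_i\bigr\}
\]
has $|J|>\beta\log n$. By Lemma~\ref{str}(a), the fact that CoverBAD uses at most $(2\ell+1)|BAD|$ vertices from each $V_j$, and $|R_i|=n_r$, we have $|V_i\setminus(V_i'\cap V_i'')|=O(n^{1-10\beta})$. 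Conditional on all information revealed so far and on $J$, the principle of deferred decisions implies that the distinct heads $\{w_j\}_{j\in J}$ corresponding to the $(1-o(1))|J|$ indices whose underlying edges are not incident to $BAD$ form a uniform random subset of the unexposed vertices of $V_i\setminus BAD$. Since the ``lost'' targets $V_i\setminus(V_i'\cap V_i'')$ are only an $O(\ell n^{-10\beta})$ fraction of this population, Lemma~\ref{est} yields
\[
\pr\!\bigl(\,|\{j\in J:w_j\in V_i'\cap V_i''\}|<6\,\bigr)\;\le\;\binom{|J|}{5}\bigl(O(\ell n^{-10\beta})\bigr)^{|J|-5}\;=\;n^{-\omega(1)}.
\]
The symmetric argument using $v\notin BAD_{\ell+i}$ handles $E_i^-(v)$, with the roles of tails and heads swapped and the window shifted to $(t_{\ell+i-1},t_{\ell+i}]$.

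The main subtlety, and the only real obstacle, is making the deferred-decisions step rigorous. At the time the lemma is invoked we have already revealed the colors, the tails $v_j$ for $j\le t_\ell$, the heads $w_j$ for $t_\ell<j\le t_{2\ell}$, all edges incident to $BAD$, the $O(|BAD|\log n)=o(n^{1-9\beta})$ further positions probed by CoverBAD, and the random relocation defining the $V_i''$. Crucially, none of this determines the unrevealed endpoint of a window position whose underlying edge is not incident to $BAD$ and was not probed by CoverBAD, and by the symmetry of the edge-process these unrevealed endpoints are jointly uniform over the remaining pool. The tail bound above then legitimately applies, and the union bound over the $O(n)$ pairs $(v,i)$ finishes the proof.
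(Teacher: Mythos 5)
Your approach is correct in essence but takes a genuinely different route from the paper. The paper exploits the randomness in the redistribution step: once $|N_i(v)|=\Theta(\log n)$ is pinned down (via $v\notin BAD_i$ and Lemma~\ref{str}(b)), it observes that the random $n_r$-set $R_i$ removed from $V_i'$ is, by construction, uniformly random and independent of the edge process, so $|N_i(v)\cap R_i|$ is exactly hypergeometric and the tail bound is immediate. You instead exploit the randomness in the unrevealed heads $w_j$ themselves, arguing by deferred decisions that they are (roughly) a uniform random subset of the unexposed $V_i\setminus BAD$, and bounding the chance that most land in the small lost set $V_i\setminus(V_i'\cap V_i'')$. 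Both routes yield the same superpolynomially small failure bound, but the paper's is tidier precisely because it only uses the single, cleanly independent source of randomness; your route requires verifying that, conditional on all revelations (colors, one endpoint per edge, edges incident to $BAD$, CoverBAD probes, the redistribution, \emph{and} the implicit knowledge of which $V_j$ each unrevealed head lies in, which is needed to determine $BAD$), the unrevealed heads are exchangeable \emph{and} negatively correlated enough for the union bound $\binom{|J|}{5}(\text{loss fraction})^{|J|-5}$ to be valid. You assert this as ``jointly uniform over the remaining pool'' but do not prove it; it is plausible and should hold by a sampling-without-replacement argument, yet it is exactly the delicate step that the paper sidesteps by using $R_i$ as the random object. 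Worth noting as well: your bound only uses the constraint $w_j\in V_i\setminus BAD$ and treats \emph{all} of $V_i\setminus(V_i'\cap V_i'')$ as adversarially lost, whereas the paper first uses Lemma~\ref{str}(b) to show deterministically that $O(1)$ heads are lost to $V_i\setminus V_i'$ and only applies the probabilistic argument to $R_i$; both are fine, but the split matters for keeping the deferred-decision claim manageable.
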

\begin{proof}
Because of symmetry it is enough to show that for a fixed $v\in V_1''$ we have
$$\pr( |\set{w\in V_2'\cap V_2'': \vec{ e_i}=(v,w) \text{ with } t_1< i \leq t_2} |<6 )=o(n^{-1}).$$
Let $N_2(v)= \set{w\in V_2': \vec{ e_i}=(v,w) \text{ with } t_1< i \leq t_2}.$ Since $v\in \cA$ we have that $v\notin BAD_2$. Furthermore there are at most $O(1)$ vertices in $BAD$ within distance $2\ell$ of $v$   hence at most $O(1)$ of its neighbors do not lie in $\cA$. Thus $n_2(v)=|N_2(v)| =\Theta(\log n)$. 
Therefore
\begin{align} 
&\pr( |\set{w\in V_2'\cap V_2'': \vec{ e_i}=(v,w) \text{ with } t_1< i \leq t_2} |\leq 5 )\nono\\
\leq& \sum_{k=0}^5 \frac{\binom{n_2(v)}{n_2(v)-k}\binom{|V_2'|-n_2(v)}{n_r-n_2(v)+k}}{ \binom{|V_2'|}{n_r}}\label{redist}\\ 
\leq& \sum_{k=0}^5n_2^k(v)
\prod_{i=0}^{n_2(v)-k-1}\frac{n_r-i}{|V_2'|-i} \prod_{i=0}^{n_r-n_2(v)+k-1}\frac{n_r-n_2(v)+k-i}{|V_2'|-n_2(v)+k-i}\cdot\frac{|V_2'|+n_2(v)-i}{n_r-n_2(v)+k-i}
\nono\\
\leq&\sum_{k=0}^5 n_2^k(v)\cdot\bfrac{n_r}{|V_2'|}^{n_2(v)-k}    \nono\\
=&o(n^{-1}).\nono
\end{align}
To see inequality \eqref{redist} observe that on the event $\set{|N_2(v)\cap V_2''|=k\leq 5} $ at least $n_2(v)-k$ of the vertices in $N_2(v)$ were chosen and redistributed. The last inequality follows from the fact that 
$n_r=O(n^{1-10\beta}), n_2(v)=\Theta(\log n)$ and $|V_2'|=(1+o(1))n_\ell$.
\end{proof}
For $i \in [\ell]$ set 
$$E_i^+=\underset{v\in V''_i}{\bigcup}E^+_{i+1}(v) \hspace{5mm} \text{ and } \hspace{5mm} 
 E_i^-=\underset{v\in V''_i}{\bigcup}E^-_{i-1}(v).$$
Thus $E^+_i$ ($E^-_i$ respectively) is a set of $6|V_i''|$ edges with an endpoint in each $V_i'', V_{i+1}'\cap V_{i+1}''$ ($V_i'', V_{i-1}'\cap V_{i-1}''$ resp.) such that each vertex in $V_i''$ is incident to 6 edges in  it.
\begin{lem}\label{match}
W.h.p.\@ for $i\in[\ell-1]$,  $E_i^+\cup E^-_{i+1}$ spans a complete matching $M_i$ from $V_i''$ to $V_{i+1}''$.
\end{lem}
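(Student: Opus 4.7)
The plan is to verify Hall's condition for the bipartite graph $B_i = (V_i'', V_{i+1}''; E_i^+ \cup E_{i+1}^-)$ for each fixed $i\in[\ell-1]$, and then conclude by Hall's marriage theorem together with a union bound over the $\ell-1=O(1)$ values of $i$. By Lemma~\ref{edgematch}, w.h.p.\ every $v\in V_i''$ contributes its full six edges of $E_i^+$ landing in $V_{i+1}'\cap V_{i+1}''\subseteq V_{i+1}''$, and symmetrically every $u\in V_{i+1}''$ contributes its full six edges of $E_{i+1}^-$ landing in $V_i'\cap V_i''\subseteq V_i''$, so $B_i$ has minimum degree at least six on both sides. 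Next I would argue a near-uniformity statement: exposing in order the color sequence $\{c_j\}$, the tagged endpoints $\{v_j:j\leq t_\ell\}$ and $\{w_j:t_\ell<j\leq t_{2\ell}\}$, the edges incident to $BAD$, and the redistribution defining the $V_j''$ from the $V_j'$, one can show (using Lemma~\ref{est} exactly as in the proof of Lemma~\ref{edgematch}) that conditional on all of the above, the six endpoints of $E_i^+(v)$ are, up to $o(1)$ total-variation error, a uniform random $6$-subset of the $(1+o(1))n_\ell$ vertices in $V_{i+1}'\cap V_{i+1}''$. The analogous statement holds for $E_{i+1}^-(u)$, and the two families of $6$-subsets are (near-)independent since the windows $[t_i+1,t_{i+1}]$ and $[t_{\ell+i}+1,t_{\ell+i+1}]$ are disjoint.

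The core step is a union bound over König's certificates. If $B_i$ has no perfect matching, then there exist $A\subseteq V_i''$ and $B\subseteq V_{i+1}''$ with $|A|+|B|>n_\ell$ and no edges of $B_i$ between $A$ and $B$. In particular every $v\in A$ has all six members of $E_i^+(v)$ in $(V_{i+1}''\setminus B)\cap(V_{i+1}'\cap V_{i+1}'')$, a set of size at most $n_\ell-|B|$, while every $u\in B$ has all six members of $E_{i+1}^-(u)$ in a set of size at most $n_\ell-|A|$. Combining the near-uniformity with the near-independence yields
\begin{align*}
\pr\bigl(B_i\text{ has no perfect matching}\bigr)
\leq (1+o(1))\sum_{\substack{a+b>n_\ell\\ 1\leq a,b\leq n_\ell-1}}
\binom{n_\ell}{a}\binom{n_\ell}{b}\brac{\frac{n_\ell-b}{n_\ell}}^{6a}\brac{\frac{n_\ell-a}{n_\ell}}^{6b}.
\end{align*}
Bounding $\binom{n_\ell}{a}\leq (en_\ell/a)^a$ and similarly for $b$, and writing $a=xn_\ell$, $b=yn_\ell$, the summand is at most $\exp\bigl(n_\ell[H(x)+H(y)+6x\log(1-y)+6y\log(1-x)]\bigr)$, a function that is uniformly bounded above by $\alpha<1$ raised to the $n_\ell$ on the closed region $\{x+y\geq 1,\ 0\leq x,y\leq 1\}$. (For instance at the worst point $x=y=1/2$ the exponent is $(2\log 2-6\log 2)n_\ell/1=-4n_\ell\log 2$.) Hence the full sum is exponentially small in $n_\ell$, and Hall's condition holds w.h.p.

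The main obstacle in rigorising the sketch is the near-uniformity claim in the second step, because $BAD$, the $V_j'$, and the redistributed $V_j''$ are all functions of the random graph process and therefore entangled with the windows defining $E_i^+$ and $E_{i+1}^-$. This is handled by exactly the exposure protocol built into the construction of the 2-factor: by revealing the $c_j$'s and the tagged endpoints first, then the edges incident to $BAD$, then the redistribution, the second coordinate of each ordered edge $(v,\cdot)$ of color $\Pi_{i+1}$ in the window $[t_i+1,t_{i+1}]$ remains (almost) uniformly distributed over the unrevealed vertices of $V_{i+1}'\cap V_{i+1}''$, with a total-variation error that Lemma~\ref{est} bounds by $o(1)$. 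The resulting $(1+o(1))$ multiplicative slack in the per-vertex probability is comfortably absorbed by the exponential decay above, finishing the proof.
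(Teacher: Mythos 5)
Your overall strategy -- establish near-uniformity and near-independence of the six-edge families via Lemma~\ref{est} and the disjointness of the exposure windows, then run a union bound over Hall/K\"{o}nig-violating certificates -- is essentially the one the paper uses. The paper phrases the certificate slightly differently (it picks a minimal Hall-violating set $A$ on one side with $|A|=s$, $|N(A)|=s-1$, argues $s\leq n_\ell/2$ by passing to the complement if necessary, and then uses only the one family $E_i^+$ or $E_{i+1}^-$ emanating from the smaller side), whereas you use the full K\"{o}nig certificate $|A|+|B|>n_\ell$ with both families. Both are fine.

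There is, however, a genuine gap in your final step, and it is worth understanding. You claim the exponent $H(x)+H(y)+6x\log(1-y)+6y\log(1-x)$ is uniformly bounded above by some $\log\alpha<0$ on the closed region $\{x+y\geq 1,\ 0\leq x,y\leq 1\}$, and you cite $x=y=1/2$ as ``the worst point.'' That is false: as $(x,y)\to(1,0)$ or $(0,1)$ the exponent tends to $0$ (for example at $(1,0)$ all four terms vanish), so there is no uniform $\alpha<1$ on the closed region, and $x=y=1/2$ is in fact far from the supremum. The exponential-in-$n_\ell$ bound you want simply does not hold near the corners, so the continuous estimate as written does not prove the sum is $o(1)$. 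What rescues the argument is exactly the minimum-degree-six observation you state but never use in the computation: each $v\in A$ must place all six edges of $E^+_i(v)$ into $V_{i+1}''\setminus B$, which forces $|B|\leq n_\ell-6$, and symmetrically $|A|\leq n_\ell-6$; combined with $|A|+|B|>n_\ell$ this also gives $|A|,|B|\geq 7$. With the sum restricted to $7\leq a,b\leq n_\ell-6$, the boundary terms are only polynomially small in $n_\ell$ (roughly $\binom{n_\ell}{6}\binom{n_\ell}{7}\cdot e^{-O(1)}\cdot n_\ell^{-42}=O(n_\ell^{-29})$), not exponentially small, but still vanish, and in the interior your exponential estimate does the rest. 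The paper sidesteps this entirely by working with the parameter $s\leq n_\ell/2$ and a single product $\bigl(e^{o(1)}\binom{s}{6}/\binom{|V_1''|}{6}\bigr)^s$, which decays for all $s\geq 6$ without any corner pathology. So: your route is fine, but you need to either (a) insert the $7\leq a,b\leq n_\ell-6$ restriction and handle the polynomially small boundary terms separately, or (b) switch to the paper's one-sided $s\leq n_\ell/2$ formulation.
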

\begin{proof}
Assume that no such matching exists. Then Hall's theorem implies that either (i) $\exists A\subseteq V_i'',B\subseteq V_{i+1}''$, with $|A|=s, |B|=s-1, 6\leq s\leq \frac{|V_i''|}{2}$, and no edge from $A$ to $V_{i+1}''\setminus A$ lies in  $E_i^+$ or (ii) $\exists A\subseteq V_{i+1}'',B\subseteq V_{i}''$, with $|A|=s, |B|=s-1, 6\leq s\leq \frac{|V_{i+1}''|}{2}$, and no edge from $A$ to $V_{i}''\setminus A$ lies in  $E^-_{i+1}$. In this context, given that we have shown that (i) is unlikely, we have to be sure that the edge choices involved in (ii) are independent of those considered in (i). This is achieved by the fact that the edges in $E_i^+,E_i^-$ are sampled from distinct sets. This does not mean complete independence because we cannot choose the same edge twice and this accounts for the $O(\log n)$ term in \eqref{use18}.

Therefore the probability that $\exists i\in[\ell-1]$ such that  $E_i^+\cup E^-_{i+1}$ does not span a matching $M_i$ between $V_i''$ and $V_{i+1}''$ is bounded by
\begin{align}
&2(\ell-1)\sum_{s=6}^{|V_1''|/2}\sum_{A\in \binom{V_1''}{s}}\sum_{B\in \binom{V_2''}{s+1} } \prod_{v\in B}\frac{\binom{|A\cap V'_1\cap V_1''|}{6}}{\binom{|V_1'\cap V_1''|-O(\log n)}{6}}\label{use18}\\
\leq& 2(\ell-1)\sum_{s=6}^{|V_1''|/2}
\binom{|V_1''|}{s}\binom{|V_1''|}{s+1} 
\bigg(\frac{e^{o(1)}\binom{s}{6}}{\binom{|V_1''|}{6}}\bigg)^s\label{approx}\\
\leq& 2(\ell-1)\sum_{s=6}^{|V_1''|/2}
\bigg(\frac{e|V_1''|}{s}\bigg)^s \bigg(\frac{e|V_1''|}{s}\bigg)^{s+1} \bigg(\frac{e^{o(1)}s}{|V''|}\bigg)^{6s}\nono\\
\leq& n^2 \sum_{s=6}^{|V_1''|/2}\bigg(\frac{e^{2+o(1)} s^4}{|V_1''|^4}\bigg)^s\nono \\
=&o(1).\nono
\end{align}
For \eqref{approx} we used that $|V_1''|=|V_2''|=(1-o(1))|V_1'\cap V_1''|$
and at the last equality that $|V_i''|= (1+o(1))n_{\ell}$.
\end{proof}
The edges in $\cup_{i\in [\ell-1]} M_i$ span a set of $\Pi$-colored paths with an endpoint in each $V_1''$, $V_\ell''$ that covers $\cA$. Let $\cP_M$ be this set  of paths. Set $\cP=\cP_M \cup \cP_{bad}=\set{P_1,P_2,\ldots,P_{n_h}}$. For $P_i\in \cP$, we denote its endpoint in $V_1$ by $v_i^-$ and its endpoint in $V_\ell$ by $v_i^+$. In addition set $Q^+=\set{v_i^+:P_i\in \cP}$ and $Q^-=\set{v_i^-:P_i\in \cP}$. Let $\cP_{good}'\subset \cP$ be the set of paths with an endpoint in each of $V_1'\cap V_1''$, $V_\ell'\cap V_\ell''$. We define the following edge sets. For $v\in Q^+$ let $E^+(v)$ be the first 6 edges $(v,w)$ with $w\in \set{v^-_i:P_i\in \cP_{good}}$ that appear. Similarly, for $v\in Q^-$ let $E^-(v)$ be the first 6 edges $(w,v)$ with $w\in \set{v^+_i:P_i\in \cP_{good}}$ that appear after $e_{t_{2\ell-1}}$. Note that because $Q^+,Q^-\subseteq \cA$, these edges are not conditioned by the edges of the matchings in Lemma \ref{match}. Finally set 
$$E^+=\underset{v\in Q^+}{\bigcup} E^+(v) \hspace{5mm} \text{ and } \hspace{5mm}  E^-=\underset{v\in Q^-}{\bigcup}E^-(v).$$
We have the following two Lemmas. Their  proofs  are identical to the proofs of Lemmas \ref{edgematch} and \ref{match} respectively and hence are omitted.
\begin{lem}\label{edgematch*}
 W.h.p.\@  we have $E^+  \subset E(G_{t_{1}})$ and
$E^- \subset E(G_{t_{2\ell}})\setminus E(G_{t_{2\ell}-1})$.
\end{lem}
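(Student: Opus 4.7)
The plan is to mirror the proof of Lemma \ref{edgematch} essentially verbatim, reducing by symmetry to a single-vertex estimate and then union-bounding over $Q^+\cup Q^-$ (which has size at most $n$). Concretely, for each fixed $v\in Q^+$ I would show
$$\pr\bigl(|\{w\in \{v_j^-:P_j\in \cP_{good}\}: \vec{e}_i=(v,w),\, i\leq t_1\}| < 6\bigr) = o(n^{-1}),$$
and analogously for $v\in Q^-$ with edges $\vec{e}_i=(w,v)$ appearing in the window $(t_{2\ell-1},t_{2\ell}]$; summing over the at most $n$ choices of $v$ then yields both containments.

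For the $E^+$ half, the two inputs to feed into the Lemma \ref{edgematch} computation are: (i) every $v\in Q^+$ lies in $\cA\subseteq GOOD$ (both the endpoints of paths in $\cP_M$ and the good endpoints constructed by \emph{CoverBAD} for paths in $\cP_{bad}$), so by the reveal of tails for $i\leq t_\ell$ and the defining property of the appropriate $BAD_j$ class there are $\Theta(\log n)$ edges $(v,w)$ with $w\in V_1$ of the required color appearing by time $t_1$; and (ii) the target set $\{v_j^-:P_j\in\cP_{good}\}$ coincides with $V_1'\cap V_1''$ up to a set of size $O(n^{1-10\beta})$, by Lemma \ref{str}(a) together with the redistribution step used to build the $V_i''$'s. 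Given (i) and (ii), the hypergeometric tail computation in \eqref{redist} goes through verbatim: of the $\Theta(\log n)$ useful neighbors, at most $O(1)$ can be lost to the removed set with failure probability $o(n^{-1})$, so at least $6$ survive. The $E^-$ half is identical after substituting the window $(t_{2\ell-1},t_{2\ell}]$ for $(0,t_1]$ and the analogous incoming-edge class from among $BAD_{\ell+1},\ldots,BAD_{2\ell}$ in place of the outgoing one.

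The one point that genuinely requires verification beyond a textual substitution is the independence of the randomness governing $E^+,E^-$ from the earlier choices used to form the matchings $M_i$ in Lemma \ref{match}. This is precisely the content of the parenthetical remark immediately before the lemma statement: the edges in $E^+\cup E^-$ connect $V_\ell$ to $V_1$, whereas the matchings $M_i$ consumed only edges between $V_i''$ and $V_{i+1}''$ for $i\in[\ell-1]$, a disjoint pair of vertex classes; and $Q^+,Q^-\subseteq\cA$, so none of the exposures performed inside \emph{CoverBAD} touched these edges either. As in Lemma \ref{match}, the mild dependence coming from sharing the same underlying random edge sequence contributes only an $O(\log n)$ correction that is absorbed by the $o(n^{-1})$ slack in the single-vertex estimate, so the main obstacle here is careful bookkeeping rather than any genuinely new probabilistic content.
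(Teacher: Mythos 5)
Your proposal matches the paper's (omitted) argument: the paper explicitly states that the proof of this lemma is identical to that of Lemma \ref{edgematch}, and you correctly identify all the inputs that make the transfer work --- $Q^+,Q^-\subseteq GOOD$ gives the $\Theta(\log n)$ candidate neighbors via the relevant $BAD_j$ class and Lemma \ref{str}(b), the redistribution loses only $O(n^{1-10\beta})$ of the target set, the hypergeometric tail from \eqref{redist} then gives an $o(n^{-1})$ single-vertex bound that union-bounds over $v$, and the edge exposures are disjoint from those used for the matchings $M_i$. This is the same approach; no substantive gap.
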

\begin{lem}\label{match*}
W.h.p.\@   $E^+\cup E^-$ spans a complete matching $M^*$ from $Q^+$ to $Q^-$.
\end{lem}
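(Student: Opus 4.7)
The plan is to mirror the proof of Lemma \ref{match} essentially verbatim, with the bipartition $(V_i'',V_{i+1}'')$ replaced by $(Q^+,Q^-)$ and the edge sets $(E_i^+,E_{i+1}^-)$ replaced by $(E^+,E^-)$. Two structural ingredients make this transfer routine. First, by Lemma \ref{edgematch*} the bundles $E^+(v)$, $v\in Q^+$, are drawn from edges that appear by time $t_1$, whereas the bundles $E^-(v)$, $v\in Q^-$, are drawn from edges appearing strictly after $t_{2\ell-1}$; so the two collections come from disjoint time-windows of the process and, up to the same $O(\log n)$ ``already-sampled-edge'' correction that is absorbed in \eqref{use18}, are independent of one another. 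Second, $|Q^+|=|Q^-|=n_\ell$ and each of the target sets $\set{v_i^-:P_i\in\cP_{good}}$ and $\set{v_i^+:P_i\in\cP_{good}}$ has size $(1+o(1))n_\ell$, since by Lemma \ref{str}(a) at most $|BAD|=O(n^{1-10\b})$ of the paths in $\cP$ lie in $\cP_{bad}$.

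Suppose for contradiction that no perfect matching exists in the bipartite graph $(Q^+,Q^-,E^+\cup E^-)$. Applying Hall's theorem, together with the standard observation that a deficient set of size exceeding $n_\ell/2$ on one side forces a deficient set of size at most $n_\ell/2$ on the other side, we may find an integer $s$ with $6\leq s\leq n_\ell/2$ and either
\begin{enumerate}
\item[(i)] $A\subseteq Q^+$ with $|A|=s$ and $B\subseteq Q^-$ with $|B|=s-1$, such that every edge of $E^+$ incident to $A$ has its $Q^-$-endpoint in $B$; or
\item[(ii)] the symmetric configuration, with $A\subseteq Q^-$ and $B\subseteq Q^+$, for the bundles $E^-$.
\end{enumerate}
Thanks to the disjoint-window independence noted above, cases (i) and (ii) may be treated separately.

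For case (i), fix $A$ and $B$. For each $v\in A$ the probability that all six members of $E^+(v)$ land in $B\cap\set{v_i^-:P_i\in\cP_{good}}$ is, as in \eqref{use18}, bounded by
$$\frac{\binom{s-1}{6}}{\binom{(1+o(1))n_\ell-O(\log n)}{6}}\leq \bfrac{e^{o(1)}s}{n_\ell}^{6},$$
and these events are independent across $v\in A$ up to the same $O(\log n)$ correction. Summing over $A$ and $B$ and proceeding exactly as in \eqref{approx} yields
$$\sum_{s=6}^{n_\ell/2}\binom{n_\ell}{s}\binom{n_\ell}{s-1}\bfrac{e^{o(1)}s}{n_\ell}^{6s}\leq n^2\sum_{s=6}^{n_\ell/2}\bfrac{e^{2+o(1)}s^4}{n_\ell^4}^{s}=o(1).$$
Case (ii) is handled identically. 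The only substantive point that needs verification is the disjoint-window independence claim; once that is in hand the computation is a direct repetition of \eqref{use18}--\eqref{approx} and there is no genuinely new obstacle.
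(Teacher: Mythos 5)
Your proposal is correct and takes essentially the same approach as the paper, which explicitly omits the proof of Lemma~\ref{match*} on the grounds that it is identical to that of Lemma~\ref{match}; you have simply carried out that transfer, correctly noting the disjoint time-windows (from Lemma~\ref{edgematch*}) that replace the role of the disjoint edge sets in Lemma~\ref{match}.
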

We now use $M^*$ to join the paths in $\cP$ and create a 2-factor $F'$. Thus $E(F')=M^*\cup (\cup_{i\in [\ell-1]}M_i)$. We finish this subsection with the following Lemma.
\begin{lem}
W.h.p.\@ $F'$ consists of $O(\log n)$ cycles.
\end{lem}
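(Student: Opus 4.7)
The $2$-factor $F'$ decomposes as the union of the $n_h$ paths in $\cP$ together with the perfect matching $M^*$. Define a permutation $\pi \in S_{n_h}$ by $\pi(i) = j$ whenever $(v_i^+, v_j^-) \in M^*$; then the cycles of $F'$ are in bijection with the cycles of $\pi$, so it suffices to bound the number of cycles of $\pi$ by $O(\log n)$ w.h.p.

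My plan is to argue, via a symmetry/deferred-decisions argument, that $\pi$ is distributed as a uniformly random permutation of $[n_h]$. Granting this, the classical fact that the number of cycles in a uniform random permutation of $[N]$ has both mean and variance equal to $(1+o(1))\log N$, combined with Chebyshev's inequality, yields at most $2\log n$ cycles w.h.p.

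For the symmetry step, I condition on all randomness used in the construction of $\cP = \cP_{bad}\cup\cP_M$, including the endpoint sets $Q^+,Q^-$ and the labeling $P_1,\ldots,P_{n_h}$ of the paths. The edges in $E^+\cup E^-$, from which $M^*$ is extracted, are then sampled from the remaining random process by the ``first 6 edges'' rule, which is equivariant under relabelings within $Q^+$ and within $Q^-$. Extracting $M^*$ by any equivariant procedure, for instance uniformly at random among the perfect matchings guaranteed by Lemma~\ref{match*}, therefore makes the induced bijection $Q^+\to Q^-$ uniform in distribution, and hence $\pi$ is uniform on $S_{n_h}$.

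The main obstacle is making this symmetry rigorous once we condition on the outputs of the prior phases. The essential point is that the algorithm \emph{CoverBAD} which produces $\cP_{bad}$ and the matchings $M_1,\ldots,M_{\ell-1}$ which produce $\cP_M$ are built from edges disjoint from those forming $E^+\cup E^-$, and their outputs depend on $Q^\pm$ only as unordered sets. Thus after conditioning, the remaining edge process from which $E^+\cup E^-$ is drawn still respects the required exchangeability under relabelings within $Q^+$ and within $Q^-$, and the symmetry argument goes through.
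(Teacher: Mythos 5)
Your overall strategy is the same as the paper's: identify $M^*$ with a permutation, invoke a symmetry argument for uniformity, and apply the classical bound on the number of cycles of a uniform random permutation. However, the specific claim that the induced $\pi$ is uniform on all of $S_{n_h}$ is false, and this is a genuine gap. The ``first $6$ edges'' rule is not equivariant under arbitrary relabelings of $Q^+$ and $Q^-$: for $v\in Q^+$, the set $E^+(v)$ is by definition restricted to edges $(v,w)$ with $w\in\set{v_i^-:P_i\in\cP_{good}}$, and for $v\in Q^-$ the set $E^-(v)$ is restricted to edges from $\set{v_i^+:P_i\in\cP_{good}}$. Hence every matching edge $(v_i^+,v_j^-)\in M^*\subset E^+\cup E^-$ has at least one of $P_i,P_j$ in $\cP_{good}$; in particular if $P_i,P_j\notin\cP_{good}$ then $(v_i^+,v_j^-)$ can never occur in $M^*$, so $\pi$ never maps such an $i$ to such a $j$. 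Since $\cP\setminus\cP_{good}$ is nonempty (indeed polynomially large) w.h.p., $\pi$ is supported on a proper subset of $S_{n_h}$ and cannot be uniform.

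The paper's fix is to pass to the permutation induced on $\set{v_i^+:P_i\in\cP_{good}}$. By the constraint just noted, every cycle of $F'$ must pass through a good-path endpoint, so the cycles of $F'$ are in bijection with the cycles of the ``shortcut'' permutation obtained by following $F'$ from one good endpoint to the next. The construction of $E^\pm$ is genuinely equivariant under relabelings of the paths in $\cP_{good}$, so this induced permutation on $\cP_{good}$-endpoints is uniform, and the classical cycle-count bound applies to it. Your argument would go through once you restrict the equivariance claim to $\cP_{good}$ and read off the cycle count from the induced permutation on good endpoints rather than from a (nonexistent) uniform permutation on all of $[n_h]$.
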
 
\begin{proof}
The key observation is that given $M_i,i\in[\ell-1]$, there is a one to one correspondence between realizations of $F'$ and permutations on $\set{v^+_i:P_i\in \cP_{good}}$. Here we are using the fact that our pattern is not trivial (i.e.\@ $\ell>1$). In addition due to the construction of $E^+,E^-$ every cycle in $F'$ contains a vertex in $\set{v^+_i:P_i\in \cP_{good}}$.
Furthermore, due to symmetry every matching $M^*$ that may occur, occurs equally likely and thus  each possible permutation  on $\set{v^+_i:P_i\in \cP_{good} }$ is equally likely to occur. Each cycle of $F'$ corresponds to a cycle of the permutation generated by $M^*$. It is well known that w.h.p.\@ a uniformly random permutation on $M$ elements consists w.h.p.\@ of at most $2\log M$ cycles and the lemma follows.
\end{proof}
\subsection{Reduction to Lemma \ref{reduction}}
We now generate graphs $G',H,F,G_1$ as follows that satisfy the conditions of Lemma \ref{reduction}. These graphs will have vertex set $V'$ distinct from $[n]$. $G'$ is designed so that a Hamilton cycle in $G'$ can be used to construct a $\Pi$-colored Hamilton cycle in $G^r_\tau$. There is a vertex $v(P_i)\in V'$ for each $P_i \in \cP$. We let 
$$F=\set{(v(P_i),v(P_j)): P_i,P_j\in \cP \text{ and } (v^+_i,v^-_j)\in M^*}$$ Then w.h.p. $F$ defines a collection of at most $2\log |V'|$ cycles that span $V'$. We let $H$ be the graph consisting of the arcs in $H'\setminus F$ where 
$$H':= \set{(v(P_i),v(P_j)): P_i,P_j\in \cP \text{ and } (v^+_i,v^-_j) \in E(G_{t_{2\ell}}^r)  }.$$
Let $p_1=\frac{\log n}{100r n}$, $B=\set{f_1,f_2,\ldots,f_b}$ be the edges in $\set{(v^+_i,v^-_j): P_i,P_j\in \cP}\cap (E(G_{\tau_{fit-\Pi}})\setminus E(G_{2\ell}))$ of color $\Pi_\ell$. In order to generate $G_1$ we first generate $k(F,H)$ i.i.d.\@ $Bernoulli(p_1)$ random variables $X_i$ with probability of success  $p_1= \frac{\log n}{100 r  n}$. Here $k(F,H)$ equals the number of arcs spanned by $V'$ and not included in $E(F)\cup E(H)$.
Let $N_1$ be the number of $X_i's$ with $X_1=1$. We generate $E(G_1)$ as follows: For $i \in [N_1]$ we include for each edge in $\{f_1,,\ldots,f_{N_1}\}$ the corresponding arc, that is for $f_i=(P^+_i,P^-_{i'})$ we include the arc $(v(P_i),v(P_{i'}))$.
\vspace{3mm}
\\Now let $G'=(V',E')$ where $E'=E(F)\cup E(H) \cup E(G_1)$. It follows from Lemma \ref{str}(d) that $H$ has maximum degree $O(\log |V'|)$. Since $\Pi$ is not trivial every path in $\cP$ has length at least 2 hence its endpoints are distinct.  Therefore there is a one to one correspondence between arcs of $G'$ not in $E(F)\cup E(H)$ and edges in $B'=\set{(v^+_i,v^-_j): P_i,P_j\in \cP}\setminus E(G_{t_{2\ell}})$. Moreover on the event that $N_1\leq |B|$, any set $A$ of $N_1$ arcs corresponding to edges in $B'$ is equally likely to satisfy $A=\set{f_1,f_2,\ldots,f_{N_1}}$. Therefore on the event $N_1\leq |B|$, $E(G_1)$ has the same distribution as if we conditioned on $N_1$ edges appearing in the model where every edge not in $E(F)\cup E(H)$ appears independently with probability $p_1=\Omega(\frac{\log n}{n})$. Hence in the case that $N_1\leq |B|$, it follows from Lemma \ref{reduction} that  $G'$ has a Hamilton cycle. Any such cycle $Q$ corresponds to a $\Pi$-colored  Hamilton cycle $Q'$  in $G_{\tau_{fit-\Pi}}^r$. Here $Q'$ is  the Hamilton cycle induced by the union of the edges corresponding to the arcs in $Q$ and the edges spanned by the paths in $\cP_M$. $Q'$ is $\Pi$-colored because every vertex in $V'$ corresponds to a $\Pi$-colored path in $G_{\tau_{fit-\Pi}}^r$ that starts with color $\Pi_1$, ends with color $\Pi_{\ell-1}$ and every edge in $E(F) \cup E(G_1')$ corresponds to a $\Pi_{\ell}$-colored edge in $G_{\tau_{fit-\Pi}}^r$
\begin{lem}
W.h.p.\@ $|B|\geq N_1$.
\end{lem}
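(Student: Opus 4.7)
My plan is to show that $\E{|B|}$ exceeds a w.h.p.\@ upper bound on $N_1$ by a large constant factor, and that both quantities concentrate, so the inequality will follow by a union bound.

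First I bound $N_1$ from above. Since $V' = \cP$ has $n_h = (1+o(1))\, n/\ell$ vertices (using Lemma \ref{str}(a) to get $|BAD| = O(n^{1-10\beta})$), we have $k(F,H) \le n_h(n_h-1) \le (1+o(1))\, n^2/\ell^2$, and with $p_1 = \log n/(100 r n)$ Lemma \ref{binomial} yields $N_1 \le (1+o(1))\, \frac{n \log n}{100\, r \ell^2}$ w.h.p.

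Next I lower-bound $|B|$. I first use Corollary \ref{crude} to replace the random stopping time by a deterministic cutoff: set $T = \lfloor 0.4\, n\log n \rfloor$ and let $B' \subseteq B$ consist of those edges of $B$ that are revealed in the window $(t_{2\ell}, T]$, so that $|B| \ge |B'|$ on the w.h.p.\@ event $\tau_{fit-\Pi}\ge T$. Conditioning on $\cF_{t_{2\ell}}$---the colors of all edges, the endpoint orderings up to time $t_{2\ell}$, and the positions of the $BAD$-incident edges---determines $Q^+ \subseteq V_\ell$ and $Q^- \subseteq V_1$, each of size $n_h = (1+o(1))\, n/\ell$, with $Q^+, Q^- \subseteq GOOD$. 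Since both endpoints of any pair in $Q^+ \times Q^-$ lie in $GOOD$, these pairs are disjoint from the set of $BAD$-exposed edges, so the edges occupying positions $t_{2\ell}+1, \ldots, T$ are uniform without replacement over the remaining pairs, with i.i.d.\@ uniform colors. A direct computation gives
\begin{align*}
\E{|B'| \mid \cF_{t_{2\ell}}} = (1+o(1))\, n_h^2 \cdot \frac{T-t_{2\ell}}{r\,(N-t_{2\ell})} = (1+o(1))\,\frac{0.8\, n\log n}{r\ell^2},
\end{align*}
roughly $80$ times the upper bound on $N_1$. A hypergeometric Chernoff bound then gives $|B'| \ge \frac{n\log n}{2 r\ell^2}$ w.h.p., and the intersection of the three w.h.p.\@ events $\{\tau_{fit-\Pi}\ge T\}$, the upper bound on $N_1$, and the lower bound on $|B'|$ implies $|B|\ge N_1$.

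The main obstacle I foresee is the concentration of $|B'|$, since it is a sum of indicators under a rich conditioning on $\cF_{t_{2\ell}}$ and uses sampling without replacement. This is resolved by exploiting the fact that $Q^+\times Q^-$ pairs are entirely disjoint from the $BAD$-exposed edges, so the conditional distribution over positions $(t_{2\ell}, T]$ is essentially uniform on a nearly-full collection of non-$BAD$-incident pairs with independent uniform colors, where a standard hypergeometric tail bound (or an Azuma--McDiarmid argument on the ordered sequence of revelations) applies. Replacing $\tau_{fit-\Pi}$ by the deterministic cutoff $T$ via Corollary \ref{crude} is equally essential to avoid having to track the random stopping time directly.
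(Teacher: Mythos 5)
Your proposal follows essentially the same route as the paper's own argument: both sides replace the random stopping time by the deterministic $\Omega(n\log n)$ lower bound from Corollary \ref{crude}, bound $N_1$ from above by a binomial tail using $k(F,H)\le (1+o(1))n^2/\ell^2$ and $p_1=\log n/(100rn)$, and bound $|B|$ from below by a hypergeometric tail over the $(1+o(1))n^2/\ell^2$ eligible pairs $(v_i^+,v_j^-)$. The only difference in flavor is that the paper first lower-bounds the count $X$ of $\Pi_\ell$-colored positions in the window and then pushes it through Lemma \ref{est}, whereas you compute $\E{|B'|}$ directly and appeal to a hypergeometric Chernoff bound; these are interchangeable framings of the same estimate, and your constants ($\E{|B'|}\approx 0.8n\log n/(r\ell^2)$ versus $N_1\le(1+o(1))n\log n/(100r\ell^2)$) leave more than enough room.

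One small inaccuracy to tighten: conditioning on the $\cF_{t_{2\ell}}$ you describe (all colors, orderings up to $t_{2\ell}$, and the positions of $BAD$-incident edges) does not by itself determine $Q^+$ and $Q^-$. The paths $\cP$ are produced by CoverBAD, which in Steps 3--4 exposes further edges emanating from $GOOD$ vertices lying within distance $\ell$ of $BAD$, and by the matchings $M_i$, which require revealing the second endpoint of edges in the windows $(t_{i-1},t_i]$ and $(t_{\ell+i-1},t_{\ell+i}]$ until the first six relevant ones are found. Your conditioning should be enlarged to include this extra exposure. Doing so is harmless: the CoverBAD exposures touch only $O(|BAD|\,\ell\log n)=o(n)$ edges (all incident to vertices near $BAD$, hence disjoint from $Q^+\times Q^-$), and the matching exposures w.h.p.\@ concern positions $\le t_{2\ell}$ only, so the conditional law of the edges at unrevealed positions in $(t_{2\ell},T]$ remains sufficiently close to uniform over the eligible pairs for your expectation computation and concentration to go through. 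The paper's own proof is equally quiet about this point, so it is a spot to make explicit rather than a defect relative to the source.
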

\begin{proof}
We will show that w.h.p. $N_1<\frac{ n \log n}{10r\ell^2 }<|B|$. 
Let $X$ be the number of $\Pi_\ell$ colored edges in $E(G_{\tau_{fit-\Pi}})\setminus(E(G_{2\ell}))$. 
Each edge is $\Pi_\ell$ colored independently with probability $1/r$. Therefore $X$ is distributed as a $Binomial(\tau_{fit-\Pi}-t_{2\ell},1/r)$ random variable.
 Corollary \ref{boundssub} and Lemma \ref{binomial} imply that   w.h.p.\@ $X\geq \frac{3n\log n}{10 r}.$ 
The edges in $X$ are choosen uniformly at random from those not in $G_{t_{2\ell}}$. Since $|\cP|=n_{\ell}-|O(BAD)|$, Lemma \ref{str}(d) implies that w.h.p.\@ at most $10r n_\ell \log n$ out of the $(1+o(1))\frac{n^2}{\ell^2}$ edges in 
$\set{(v^+_i,v^-_j): P_i,P_j\in \cP}$ lie in $G_{t_{2\ell}}$. Therefore
\begin{align*}
\pr\bigg (|B|\leq \frac{n\log n}{10 r \ell^2 } \bigg)&\leq \sum_{k=0}^{\frac{n \log n}{10 r\ell^2 }}\frac{\binom{n^2/\ell^2}{k}\binom{N-t_{2\ell}-(1+o(1))n^2/\ell^2}{\frac{3n\log n}{10r}-k}}{ \binom{N-t_{2\ell}}{\frac{3n\log n}{10r}}}
\\ &\leq \sum_{k=0}^{\frac{n \log n}{10 r\ell^2 }} \bigg(\frac{9n^3\log n}{10r\ell^2kN} \bigg)^k\exp \set{-\frac{(1+o(1))3n^3\log n}{10r\ell^2N}}\\
&\leq \sum_{k=0}^{\frac{n \log n}{10 r\ell^2 }} \bigg(\frac{9n^3\log n}{10r\ell^2kN} \bigg)^k
\exp \set{-\frac{n\log n}{2r\ell^2}}\\
&\leq {\frac{n \log n}{10 r\ell^2 }} \cdot 20^{\frac{n \log n}{10 r\ell^2 }}
\exp \set{-\frac{n\log n}{2r\ell^2}}=o(1).
\end{align*}
For  the second inequality we use Lemma \ref{est}
with  $a=\frac{n^2}{l^2},b=(1+o(1))\frac{n^2}{\ell^2},c=t_{2\ell},d=0,  i=k, 
t=\frac{3n\log n}{10r},Q=N$. For the last inequality we used that 
$(\frac{9n^3\log n}{10r\ell^2kN})^k$ in the sum is maximized when $k= \frac{n\log n}{10 r\ell^2 }.$

On the other hand from Lemma \ref{binomial}, it follows that
\begin{align*}
\pr\bigg(N_1\geq \frac{ n \log n}{10r\ell^2 } \bigg) 
\leq \pr\bigg(Binomial\bigg(\frac{n^2}{\ell^2}, \frac{ \log n}{100 r n}\bigg) > \frac{n \log n}{10 r\ell^2 } \bigg) 
 =o(1).
\end{align*}
\end{proof}
\section{Proof of Theorem \ref{Thm2}}
\subsection{Outline of proof}
We first construct a large $\Pi$-colored cycle $C$ containing most of the good vertices. The construction follows the argument from the proof of Theorem \ref{Thm1} and is omitted. The cycle will be such that the vertices not in $C$ can be paired up as $v,v_1$ where $(v,v_1)$ has color $\Pi_k$, say. In addition there will be vertices $v_2,v_3\in C$ such that there is an edge $(v_1,v_2)$ of color $\Pi_{k-1}$ and an edge $(v_1,v_3)$ of color $\Pi_{k+2}$. Then to find a $\Pi$-colored path from a $v$ to $w$ we use $(v,v_1,v_2,Q,w_3,w_1,w)$ where $Q$ is the cycle path starting at $v_2$ in which the indices of the colors ``decrease''.
\subsection{The proof itself}
In this section we will use the following result. Its first inequaltiy follows from the connectivity hitting time result of the random graph process (see \cite{EvolutionGnm}, \cite{KarFr}). 
\begin{lem}\label{conn}
Let $\epsilon>0$. Then w.h.p.\@ $ \frac{(1-\epsilon)n\log n}{2}\leq \tau_1\leq \tau_{\Pi-connected}\leq t_{\Pi}\leq \frac{(1+\epsilon)rn\log n}{2}.$
\end{lem}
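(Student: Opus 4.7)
The lemma chains four inequalities, and my plan is to dispatch each one separately; none requires fresh probabilistic work beyond what has already been assembled in the paper.

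The leftmost bound $\frac{(1-\epsilon)n\log n}{2}\leq \tau_1$ is the classical hitting-time result for the disappearance of isolated vertices in the random graph process, so I would simply quote \cite{EvolutionGnm,KarFr} exactly as the preamble directs. For the second inequality $\tau_1\leq \tau_{\Pi-connected}$, I would observe that $\Pi$-connectivity is a strict strengthening of ordinary connectivity: if every pair of vertices is joined by a $\Pi$-colored path, then $G_i$ is connected, and in particular (since $n\geq 2$) it has no isolated vertex, which forces the minimum-degree-one condition at time $\tau_{\Pi-connected}$.

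For $\tau_{\Pi-connected}\leq \tau_\Pi$ (reading $t_\Pi$ as $\tau_\Pi$), I would argue that a $\Pi$-colored Hamilton cycle supplies a $\Pi$-colored path between any two vertices. Concretely, if $C=f_1,f_2,\ldots,f_n$ is a Hamilton cycle whose consecutive edges carry colors $\Pi_{l+1},\Pi_{l+2},\ldots$ for some offset $l$, then for any $u,w\in V$ the sub-arc of $C$ from $u$ to $w$ (in either direction around the cycle) has consecutive edges colored according to a cyclic shift of $\Pi$, so by definition it is $\Pi$-colored. Consequently $G^r_{\tau_\Pi}$ is $\Pi$-connected.

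Finally, for the rightmost bound $\tau_\Pi\leq \frac{(1+\epsilon)rn\log n}{2}$, I would combine Theorem \ref{Thm1} (which gives $\tau_\Pi = \tau_{fit-\Pi}$ w.h.p.) with Lemma \ref{boundssub}, which supplies $\tau_{fit-\Pi}\leq \frac{r}{d(\Pi)}\cdot\frac{(1+\epsilon/2)n\log n}{2}$ w.h.p.; using $d(\Pi)\geq 1$ this is at most $\frac{(1+\epsilon)rn\log n}{2}$. Since the whole lemma is a collation of facts already proved earlier in the paper or in standard references, I do not anticipate any step posing a genuine obstacle; the only substantive content is the trivial but useful observation that a $\Pi$-colored Hamilton cycle witnesses $\Pi$-connectivity.
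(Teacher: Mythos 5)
Your proposal is correct and essentially matches the route the paper implicitly takes (the paper itself only supplies a one-line pointer for the first inequality and leaves the rest as obvious). The decomposition into four inequalities, the deterministic observations that $\Pi$-connectivity $\Rightarrow$ connectivity $\Rightarrow$ minimum degree $\geq 1$ and that a $\Pi$-colored Hamilton cycle witnesses $\Pi$-connectivity, and the invocation of Theorem~\ref{Thm1} together with Lemma~\ref{boundssub} and $d(\Pi)\geq 1$ for the rightmost bound, are all exactly the intended fillings-in; there is no circularity since Theorem~\ref{Thm1} is established before Lemma~\ref{conn} is stated and is not used in its proof. One small imprecision: a sub-arc of a $\Pi$-colored Hamilton cycle is a $\Pi$-colored path only when read in the direction that follows the cyclic labelling (the reverse arc would carry the reversed pattern, which need not be a shift of $\Pi$); but since you only need one $\Pi$-colored $u$--$w$ path, the forward arc suffices and the conclusion stands.
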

The main ingredient of the proof of Theorem \ref{Thm2} is the following Lemma.
\begin{lem}\label{pcycle}
Let  $A\subset GOOD$ be such that $A=O(|BAD|)$. For $i \in [\ell]$ let $\bar{V}_i\subset V_i\cup A$ be such that $\bar{V}_i\subset GOOD$ and $|\bar{V}_i|=\frac{n^2}{\ell^2}-b$ where $b=O(|BAD|)$.
 Then w.h.p.\@ there is a $\Pi$-colored cycle $C=(v_1,v_2,\ldots,v_h,v_1)$ such that 
\begin{enumerate}[(i)]
\item $V(C)=\cup_{i \in \ell} \bar{V}_i$.
\item $E(C)\subseteq \set{ e_i=(a,b): \exists j\in[\ell], a\in \bar{V}_j, b\in \bar{V}_{j+1} \text{ such that } i\leq \tau_{1-\Pi}, c(e_i)=\Pi_j}$.
\end{enumerate}
\end{lem}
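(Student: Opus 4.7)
The plan is to mimic the construction in the proof of Theorem \ref{Thm1}, but restricted to the vertex set $\bigcup_{i \in [\ell]} \bar{V}_i$. Because $\bar{V}_i \subseteq GOOD$ for every $i$, the most delicate step of Theorem \ref{Thm1}, namely the \emph{CoverBAD} routine, is entirely unnecessary: we only need to carry out the 2-factor construction followed by the reduction to Lemma \ref{reduction}. The size deficits $|V_i \setminus \bar{V}_i| = O(|BAD|) = O(n^{1-10\beta})$ and additions $|\bar{V}_i \setminus V_i| \le |A| = O(|BAD|)$ are both negligible compared to the $\Theta(\log n)$ slack we have in every edge count, so all estimates from Section 5 transfer with only cosmetic changes.

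First I would construct the 2-factor. For each $i \in [\ell-1]$, repeat the construction of $E_i^+, E_i^-$ from Section 5.1, but restricting endpoints to lie in $\bar{V}_i$ and $\bar{V}_{i+1}$ and drawing edges from the appropriate window $(t_{i-1}, t_i]$. Lemma \ref{edgematch} and Lemma \ref{match} go through verbatim (the shift in set sizes is absorbed by the slack in the corresponding calculations), yielding w.h.p.\ a perfect matching $M_i$ of color $\Pi_i$ from $\bar{V}_i$ to $\bar{V}_{i+1}$. Concatenating $M_1, \ldots, M_{\ell-1}$ produces a collection $\cP$ of $\Pi$-colored paths, each with one endpoint in $\bar{V}_1$ and the other in $\bar{V}_\ell$, covering $\bigcup_i \bar{V}_i$. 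Then, using edges revealed after $t_{2\ell-1}$, find a perfect matching $M^*$ of color $\Pi_\ell$ from the $\bar{V}_\ell$-endpoints to the $\bar{V}_1$-endpoints of these paths, exactly as in Lemma \ref{match*}. This closes the paths into a 2-factor $F'$ on $\bigcup_i \bar{V}_i$. The same symmetry argument as in the final lemma of Section 5.1 shows that $F'$ decomposes into $O(\log n)$ $\Pi$-colored cycles w.h.p.

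Finally I would merge these cycles into a single $\Pi$-colored cycle using Lemma \ref{reduction}, following Section 5.2 almost word-for-word. Build the auxiliary graph on $V' = \{v(P) : P \in \cP\}$, let $F$ be the cycle structure induced by $M^*$, let $H$ be the remaining exposed $\Pi_\ell$-colored arcs between path endpoints (degree controlled by Lemma \ref{str}(d)), and generate $G_1$ from the unused $\Pi_\ell$-colored edges between $\bar{V}_\ell$ and $\bar{V}_1$ appearing in $E(G_{\tau_{1-\Pi}}^r) \setminus E(G_{t_{2\ell}}^r)$. The main obstacle is that we only have the weaker window $[t_{2\ell}, \tau_{1-\Pi}]$ available, instead of $[t_{2\ell}, \tau_{fit-\Pi}]$ used in Theorem \ref{Thm1}. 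However, Lemma \ref{conn} gives $\tau_{1-\Pi} \ge (1-\epsilon)\frac{n\log n}{2}$ w.h.p., so a calculation entirely parallel to the one bounding $|B| \ge N_1$ in Section 5.2 still yields $|B| \ge \frac{n \log n}{10 r \ell^2}$ w.h.p.\ while $N_1 = O(\frac{n \log n}{r \ell^2})$ with sufficiently small constant. Lemma \ref{reduction} then delivers a Hamilton cycle on $V'$, which unpacks to a single $\Pi$-colored cycle $C$ on $\bigcup_i \bar{V}_i$ satisfying (i) and (ii).
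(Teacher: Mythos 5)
Your proposal is correct and takes essentially the same approach the authors sketch: the paper explicitly omits the proof and simply says it mirrors Theorem~\ref{Thm1} with the simplification that \emph{CoverBAD} is unnecessary (since $\bar{V}_i\subset GOOD$), the relocated vertices in $A$ are handled exactly like the shuffled vertices in Theorem~\ref{Thm1}, and the lower bound on $\tau_{fit-\Pi}$ from Corollary~\ref{crude} is replaced by the lower bound on $\tau_{\Pi-connected}$ in Lemma~\ref{conn}. Your write-up identifies all of these points and fleshes out the 2-factor construction and the $|B|\geq N_1$ check in the same way, so it faithfully reconstructs the intended argument.
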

Lemma \ref{pcycle} states that if we  remove a set of  $O(|BAD|)$ vertices from each $V_i$ and then relocate $O(|BAD|)$ vertices to form the sets $\bar{V}_i$, given the constraint that the new sets $\bar{V}_i$ are all of the same size,
then w.h.p.\@ there is a $\Pi$-colored cycle $C$ in $G^r_{\tau_{\Pi-connected}}$ that spans $\cup_{i \in \ell} \bar{V}_i$ and ``respects'' the new partitioning $\bar{V_1},...,\bar{V_\ell}$. That is $E(C)$ consists of $\Pi_j$ colored edges from $V_j$ to $V_{j+1}$ that lie in  $G^r_{\tau_{1-\Pi}}$, $j\in [\ell]$.  

The proof of Lemma \ref{pcycle}, which we omit, is similar to the proof of Theorem \ref{Thm1} with the extra advantage that we do not have to take care of any ``bad" vertices. Here we handle vertices in $A$ in the same way that we handled the vertices in $GOOD$ that we shuffled in the proof of Theorem \ref{Thm1}. 
We use the lower bound of $\tau_{\Pi-connected}$ given in Lemma \ref{conn} in place of the lower bound on $\tau_{fit-\Pi}$ given in Corollary \ref{crude}.
\vspace{3mm}
\\In order to prove Theorem \ref{Thm2} we construct a large $\Pi$-colored cycle to which we attach spikes. Here by a spike we mean a 3-star or equivalently 
 the graph on 4 vertices {\color{red}$r_0,r_1,r_2,r_3$ }and edge set $(r_0,r_1), (r_2,r_1)$ and $(r_1,r_3)$. The {\em base} vertices of a spike $r_2,r_3$ will belong to the cycle while typically its {\em head} vertex $r_0$ will be a bad vertex.
\vspace{3mm}
\\Let $h=\min\set{i\in \mathbb{Z}_{\geq 0}: i=-|BAD|\mod \ell}$ and $\Gamma(BAD)=BAD\cup N(BAD)$ where $N(BAD)$ is the neighborhood of $BAD$.
Let $S_B$ be a random subset of $GOOD\setminus \Gamma(BAD)$ of size $h$.
Finally let $B=BAD\cup S_B$. We use $S_B$ to ensure that $\ell$ divides $|B|$.
From Lemma \ref{str} and similar reasoning to that in the proof of Lemma \ref{cover}, for every $v\in B$ we can find a vertex $v_1\in GOOD$ that is adjacent to $v$ in $G_{\tau_{\Pi-connected}}^r$. Let  $\Pi_{k(v)}$ be the color of $(v,v_1)$. For every $v\in B$ we can find  $v_2,v_3\in GOOD$ such that $(v_2,v_1),(v_1,v_3)\in E(G_{2\ell}^r)$ and $(v_2,v_1),(v_1,v_3)$ have colors $\Pi_{k(v)-1}$ and $\Pi_{k(v)+1}$ respectively. In addition the selection of all the vertices above can be done such that all of them are distinct and do not lie in $B$. 

We now construct the sets $\bar{V}_i$, $i\in [\ell]$ as follows.
We begin our construction of $\bar{V}_i$ by first removing from $V_i$ all the vertices in $\set{v,v_1:v\in B}$. Then for every $v\in B$ we move $v_2$ into $\bar{V}_{k(v)-1}$ and $v_3$ into $\bar{V}_{k(v)+2}$. 
After this, for $i\in [\ell]$, we choose a random set $R_i$ of size $4\ell|B|$ from the current vertices in $V_i$, not including the vertices in $\set{v_2,v_3:v\in B}$. 
Finally we redistribute $\cup_{i\in [\ell]} R_i$
such that all the resulting sets $\bar{V}_i$ are of the same size.
By applying Lemma \ref{pcycle} with $A= \set{v_2,v_3:v\in B} \cup(\cup_{i\in [\ell]}R_i)$
 we get a $\Pi$-colored cycle $C$ that spans $\cup_{i \in \ell} \bar{V}_i$ and 
``respects'' the partitioning $\bar{V_1},...,\bar{V_\ell}$.  $C$ along with the edges that belong to the spikes allow us to claim that $G_{\tau_{\Pi-connected}}^r$ is $\Pi$-connected. To see this, we orient the edges of $C$ so that an edge of color $\Pi_i$ is followed by an edge of color $\Pi_{i-1}$, for $i\geq 1$. Then for $v\in B$ we first enter $C$ by travelling along $v,v_1,v_2$. If we wish to travel to $w\in B$ then we travel around $C$ until we reach $w_3$ and finish our path with $w_3,w_1,w$. 
\section{Directed versions}
Observe that now a pattern $\overrightarrow{\Pi}$ has also the notion of direction embedded in it. That is we are looking for an arc of color 
$\overrightarrow{\Pi}_1$ followed by an arc of color $\overrightarrow{\Pi}_2$ that is leaving its out vertex e.t.c.\@
The main difference between the proofs of Theorems \ref{Thm3} and  \ref{Thm4} and the proofs of Theorems \ref{Thm1} and \ref{Thm2} is in defining the demand of a pattern. This is because an in- and an out-arc of the same color are not exchangeable. To deal with this we may think of  the direction of an arc as a second coordinate of its color. The idea is that a vertex ``sees" an in-arc of color red  as an arc of color (red,-) and an out-arc of color blue as an arc of  color (blue,+). Thus instead of looking for a red in-arc and a blue out-arc it looks for arcs of colors (red,-) and (blue,+) respectively. 
\begin{dfn}
Let $\ell \in \mathbb{N}$ then
$$\cD(\overrightarrow{\Pi}):=\set{S\subset [\ell]\times \set{+,-}: \set{(i,+),(i+1,-)} \cap S \neq \emptyset\text{ for all }i\in[\ell]}. $$ 
\end{dfn}
\begin{dfn}
Let $r\in \mathbb{N}$ and let $\overrightarrow{\Pi}$ be a directed $[r]$-pattern. For $i \in [\ell]$ set
$\overrightarrow{\Pi}(i,+)=(\overrightarrow{\Pi}_{i},+)$ and $\overrightarrow{\Pi}(i,-)=(\overrightarrow{\Pi}_{i-1},-)$
  The ``demand" of $\overrightarrow{\Pi}$ is 
$$ d(\overrightarrow{\Pi}):= \min\set{|\{\overrightarrow{\Pi}(i,*):(i,*)\in S\}|:S\in  \cD(\overrightarrow{\Pi})}.$$
\end{dfn} 
Once again for a given  $[r]$-pattern $\overrightarrow{\Pi}$,
if there exists a set $S\in \cD(\overrightarrow{\Pi})$ and a vertex $v\in V$ such that $v$ is not incident to any arc that is assigned one of the at least  $d(\overrightarrow{\Pi})$ (color,sign) elements of  $\{\overrightarrow{\Pi}_i:i\in S\}$ then $v$ does not fit $\overrightarrow{\Pi}$. Conversely if $v$ does not fit $\overrightarrow{\Pi}$ then such a set $S\in\cD(\overrightarrow{\Pi})$ exists.
Each vertex may see $2r$ distinct pairs of (color,direction) among the arcs adjacent to it. 
Thus in place of Lemma \ref{boundssub} we have the following Lemma
\begin{lem}
Let $r=O(1)$, $\overrightarrow{\Pi}$ being a directed $[r]$-pattern and $\epsilon>0$. Then,  w.h.p.\@ 
$$ \frac{2r}{d(\overrightarrow{\Pi})}\cdot\frac{(1-\epsilon) n\log n}{2}  \leq \tau_{fit-\overrightarrow{\Pi}} \leq   \frac{2r}{d(\overrightarrow{\Pi})}\cdot  \frac{(1+\epsilon)n\log n}{2}.$$ 
\end{lem}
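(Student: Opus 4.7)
The plan is to mirror the proof of Lemma \ref{boundssub} essentially verbatim, with (color, direction) pairs replacing colors throughout. The factor $2r$ (up from $r$) in the threshold reflects the fact that each arc carries one of $r$ colors and one of two orientations (in vs.\ out) relative to any fixed vertex, so the type-space at a vertex has size $2r$ rather than $r$; correspondingly, the standard isolated-vertex threshold to invoke is the directed analog, namely that a random digraph $D(n,m)$ with $m=(1\pm\epsilon) n\log n$ contains (respectively, does not contain) a vertex of in-degree or out-degree zero w.h.p.

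For the lower bound, I would fix a witness $S^*\in\cD(\overrightarrow{\Pi})$ achieving $d=d(\overrightarrow{\Pi})$, set $A=\{\overrightarrow{\Pi}(i,*):(i,*)\in S^*\}$, and call an arc ``$A$-bad for $v$'' if it is an out-arc of $v$ of color $c$ with $(c,+)\in A$ or an in-arc of $v$ of color $c$ with $(c,-)\in A$. By the observation preceding the lemma, if $v$ has no $A$-bad arcs at time $t$, then $v$ does not fit $\overrightarrow{\Pi}$, so it suffices to show existence of such a $v$ at $t=\frac{2r}{d}\cdot\frac{(1-\epsilon)n\log n}{2}$. A single arc in the directed process is $A$-bad for $v$ with probability $d/(rn)$ (there are $(n-1)d$ bad (arc,color) combinations out of $rn(n-1)$), making the count of $A$-bad arcs at $v$ a Binomial random variable with mean $(1-\epsilon)\log n$; the isolation probability is then $\sim n^{-(1-\epsilon)}$, the expected number of $v$'s with no $A$-bad arc is $n^{\epsilon}\to\infty$, and a second-moment computation (in which the only correlation comes from the at most two arcs between $v$ and $w$) gives w.h.p.\ existence.

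For the upper bound, I would proceed by union bound. If $\overrightarrow{\tau}_{fit-\overrightarrow{\Pi}}>t:=\frac{2r}{d}\cdot\frac{(1+\epsilon)n\log n}{2}$, then the failure of some vertex $v$ to fit is certified by some $S\in\cD(\overrightarrow{\Pi})$ whose image $A$ has size at least $d$. For any fixed $v$ and $S$, the probability that $v$ avoids every $A$-bad arc is at most $(1-|A|/(rn))^t\leq\exp(-t|A|/(rn))\leq n^{-(1+\epsilon)}$; union bound over $n$ vertices and the $|\cD(\overrightarrow{\Pi})|\leq 2^{2\ell}=O(1)$ choices of $S$ gives total failure probability $O(n^{-\epsilon})=o(1)$.

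The main obstacle, which is conceptual rather than computational, is correctly identifying the directed analog of the role played by $G^r_t(A)$ in Lemma \ref{boundssub}. Because an arc's ``direction relative to $v$'' depends on which endpoint is $v$, the relevant sub-digraph is not a single fixed subgraph but a vertex-dependent collection of in- and out-neighborhoods partitioned by color; nevertheless, the disjointness of colors lets one decompose the isolation event into (almost) independent ``no in-arc of color $c$'' and ``no out-arc of color $c$'' events, each of which can be analyzed by the classical $G(n,m)$-style estimate, and the isolation probability $e^{-td/(rn)}$ works out exactly as claimed. Once this reduction is in place, the rest of the proof is a near-verbatim copy of Lemma \ref{boundssub}.
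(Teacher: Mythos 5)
The paper does not actually prove this lemma --- it only remarks that it stands ``in place of'' Lemma \ref{boundssub} --- so there is no paper proof to compare against; your proposal supplies the argument, and it takes the route the authors evidently intend. One subtlety you handle correctly, if somewhat implicitly: the undirected upper bound in Lemma \ref{boundssub} had to work with vertices of degree \emph{at most $r$} (not zero) in $G^r_t(A)$, because a single undirected edge of a repeated color can block only one of two adjacent positions; in the directed setting $e_1$ is an in-arc of $v$ and $e_2$ an out-arc, so they are automatically distinct, and the paper's stated equivalence gives that a non-fitting vertex has \emph{no} $A$-bad arc at all, which is exactly what your union bound uses.

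Two small points to tidy in a full write-up. First, the factor of two is better attributed to $N'/N = n(n-1)/\binom{n}{2} = 2$ (at a given number $m$ of edges/arcs, the expected in- or out-degree of a fixed vertex in the directed process is half the expected degree in the undirected process, so twice as many arcs are needed to reach the same per-type count) than to a ``doubled type space,'' although the numbers you compute are correct. Second, the expressions $(1-|A|/(rn))^t$ and ``Binomial with mean $(1-\epsilon)\log n$'' are with-replacement approximations; the actual model draws a uniform $t$-subset of arcs and colors them independently, so a rigorous version should use the hypergeometric estimate of Lemma \ref{est}, applied separately to the in-arc and out-arc contributions at $v$, just as your final paragraph already sketches. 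Neither point is a gap in the plan.
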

For the proofs of Theorems \ref{Thm3} and \ref{Thm4} in the definition of the BAD sets we do not have to impose an ordering on the endpoints of the arcs. Instead we can use the one given by their direction.  
The rest of the proof of Theorem \ref{Thm1} can be extended to the setting of Theorem \ref{Thm3}.
\vspace{3mm}
\\ For the proof of Theorem \ref{Thm4} we can use a similar construction to the one used in the proof Theorem \ref{Thm3}. The idea is once again to construct a $\overrightarrow{\Pi}$-colored cycle $\overrightarrow{C}$ that spans $n-O(|BAD|)$ vertices in $GOOD$. Then we join the rest of the vertices to $\overrightarrow{C}$ by an in- and an out-arc.
As before we can ensure that those arcs exist w.h.p.\@  Finally we can use a subpath of $\overrightarrow{C}$ and the aforementioned arcs to $\overrightarrow{\Pi}$-connect any two vertices.

\end{document}